\theoremstyle{plain}
\newtheorem{thm}{Theorem}[section]
\newtheorem{cor}[thm]{Corollary}
\newtheorem{prop}[thm]{Proposition}
\newtheorem{lem}[thm]{Lemma}
\theoremstyle{definition}
\newtheorem{defn}[thm]{Definition}
\newtheorem{rmk}[thm]{Remark}
\numberwithin{equation}{section}
\acrodef{S6V}{Stochastic Six Vertex}
\acrodef{SPDE}{Stochastic Partial Differential Equation}
\acrodef{STE}{Stochastic Telegraph Equation}
\acrodef{TE}{Telegraph Equation}
\acrodef{CLT}{Central Limit Theorem}
\acrodef{KPZ}{Kardar--Parisi--Zhang}
\newcommand{\Z}{\mathbb{Z}}
\newcommand{\R}{\mathbb{R}}
\newcommand{\Rp}{\mathbb{R}_+}
\newcommand{\Lip}{\text{Lip}}
\newcommand{\h}{\mathbf h}
\newcommand{\Ex}{\mathbf{E}}		
\newcommand{\ind}{\mathbf{1}}		
\newcommand{\set}[1]{{\{#1\}}}		
\newcommand{\norm}[1]{\Vert#1\Vert}
\newcommand{\Rie}{\mathcal{R}}
\newcommand{\rie}{\mathcal{R}^\text{d}}
\newcommand{\RieI}{\widetilde{\mathcal{R}}}				
\newcommand{\rieI}{\widetilde{\mathcal{R}}^\text{d}}	
\newcommand{\dnoise}{\xi^\text{d}}
\newcommand{\noise}{\xi}
\newcommand{\img}{\mathbf{i}}
\newcommand{\barx}{\overline{\mathbf{x}}}
\newcommand{\bary}{\overline{\mathbf{y}}}
\newcommand{\x}{\mathbf{x}}
\newcommand{\y}{\mathbf{y}}
\newcommand{\q}{\mathfrak{q}}
\newcommand{\filtM}{\mathscr{F}} 	
\newcommand{\filtnoi}{\mathscr{G}} 	
\newcommand{\QV}{Q} 				
\newcommand{\D}{D}					
\newcommand{\Dd}{D^\text{d}}		
\newcommand{\Ddd}{\hat{D}^\text{d}}	
\newcommand{\bk}{B}					
\newcommand{\bdy}{F}				
\newcommand{\Av}{\Sigma^*_L}		
\newcommand{\generic}{\mathcal{K}}
\newcommand{\Generic}{\mathscr{K}}
\newcommand{\I}{\mathcal{I}}
\newcommand{\genericd}{\mathcal{K}^\text{d}}
\newcommand{\Genericd}{\mathscr{K}^\text{d}}
\newcommand{\Id}{\mathcal{I}^\text{d}}
\newcommand{\e}{\varepsilon}
\newcommand{\barphi}{\bar{\phi}}
\newcommand{\V}{V}
\newcommand{\Vd}{\mathsf{v}}
\renewcommand{\bar}[1]{\overline{#1}}
\renewcommand{\hat}[1]{\widehat{#1}}
\renewcommand{\tilde}[1]{\widetilde{#1}}
\newcommand*{\Cdot}{{\raisebox{-0.5ex}{\scalebox{1.8}{$\cdot$}}}} 
\begin{document}
\title[STE limit for S6V model]{Stochastic Telegraph Equation Limit\\{}for the Stochastic Six Vertex Model}
\author[H.\ Shen]{Hao Shen}
\address{H.\ Shen,
	Departments of Mathematics, Columbia University,
	\newline\hphantom{\hspace{15pt}H.\ Shen}
	2990 Broadway, New York, NY 10027}
\email{pkushenhao@gmail.com}
\author[L-C.\ Tsai]{Li-Cheng Tsai}
\address{L-C.\ Tsai,
	Departments of Mathematics, Columbia University,
	\newline\hphantom{\hspace{15pt}L-C.\ Tsai}
	2990 Broadway, New York, NY 10027}
\email{lctsai.math@gmail.com}
\maketitle

\begin{abstract}
In this article we study the stochastic six vertex model under the scaling proposed by \cite{borodin18}, 
where the weights of corner-shape vertices are tuned to zero,
and prove \cite[Conjecture 6.1]{borodin18}:
that the height fluctuation converges in finite dimensional distributions to the solution of stochastic telegraph equation.
\end{abstract}

\section{Introduction}
The six vertex model is a model of tiling on subset $ \Omega $ of $ \Z^2 $,
with each site $ (x,y)\in\Omega $ being tiled with of the six types as depicted in Figure~\ref{tbl:S6VWeights}.
The tiling obeys the rule that each (solid) line connects to a neighboring line.
See Figure~\ref{fig:6v} for a generic realization for tiling.
In this article we focus on the stochastic weight, with $ b_1,b_2\in(0,1) $, as depicted Figure~\ref{tbl:S6VWeights},
and take the domain $ \Omega := \Z^2_{\geq 0} $ to be the first quadrant.
Fix boundary conditions on the axises $ \Z_{\geq 0}\times\set{0} $ and $ \set{0}\times\Z_{\geq 0} $
that indicate whether a given site along the axises has a line entering into $ \Z_{\geq 0}^2 $.
Starting from the site $ (1,1) $, we tile the given site with one of the six vertices
with reference to the incoming (bottom and left) line configurations, and with probability given by the weights.
This tiling construction then progresses sequentially in the linear order
$
	(1,1), (2,1), (1,2), (3,1), (2,2), (1,3), \ldots
$
to the entire quadrant.
For a given tiling there associates a height function, $ H(x,y) $.
This is a $ \Z_{\geq 0} $-valued function defined on $ (x,y)\in\Z_{\geq 0}^2 $,
so that, once interpreting a given tiling as non-intersecting lines,
the level set of $ H(x,y) $, $ x,y\in\Z_{\geq 0} $ are exactly these non-intersecting lines,
with the convention $ H(0,0):= 0 $.
See Figure~\ref{fig:6v}.

\begin{figure}[ht]
\centering
\begin{tabular}{|c|c|c|c|c|c|}
	\hline
	$I$ & $II$ & $III$ & $IV$ & $V$ & $VI$ \\
	\hline
	\includegraphics[width=35pt]{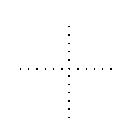}
	&
	\includegraphics[width=35pt]{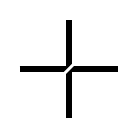}
	&
	\includegraphics[width=35pt]{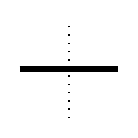}
	&
	\includegraphics[width=35pt]{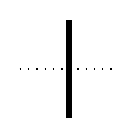}
	&
	\includegraphics[width=35pt]{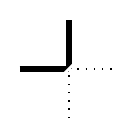}
	&
	\includegraphics[width=35pt]{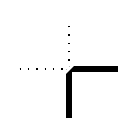}
	\\
	\hline
	$1$ & $1$ & $b_1$ & $b_2$ & $1-b_1$  & $1-b_2 $
	\\
	\hline
\end{tabular}
\caption{Six vertices with their weights.}
\label{tbl:S6VWeights}
\end{figure}

Initiated in \cite{GS92}, the \ac{S6V} model has caught much attention.
Being a special case of the special case the six vertex model,
it describes phenomena in equilibrium statistical mechanics.
On the other hand, the \ac{S6V} model also connects to nonequilibrium growth phenomena within the \ac{KPZ} universality class.
In particular, \cite{BCG2016} proved that, starting with step initial condition,
the height fluctuation converges at one-point to GUE Tracy--Widom distribution.
One point convergence under different initial condition (including the stationary case)
was obtained in \cite{AB16,A16aa},
and \cite{BBCW} studied a half-space version of the \ac{S6V} model 
and demonstrated that its one-point asymptotics match the prediction from other models in the \ac{KPZ} class.
In a related but slightly different direction,
there has been study where one tunes the weights simultaneously with spacetime scaling
in order to observe \ac{SPDE} limit.
In, \cite{BO16} it is showed that under a certain tuning of the weights,
one point distribution of the \ac{S6V} model converges to that of the \ac{KPZ} equation.
For a higher-spin generalization of the \ac{S6V} model (see \cite{Corwin2016,BP16a}),
\cite{CT15} obtained a microscopic Hopf--Cole transform, and showed convergence to \ac{KPZ} equation at process level.
For \ac{S6V} under the scaling $ b_1/b_2\to1 $, $ b_1,b_2\to b\in(0,1) $,
the convergence to \ac{KPZ} equation was obtain in \cite{CGST18} via a Markov duality method.

\begin{figure}[h]
\includegraphics[width=.6\linewidth]{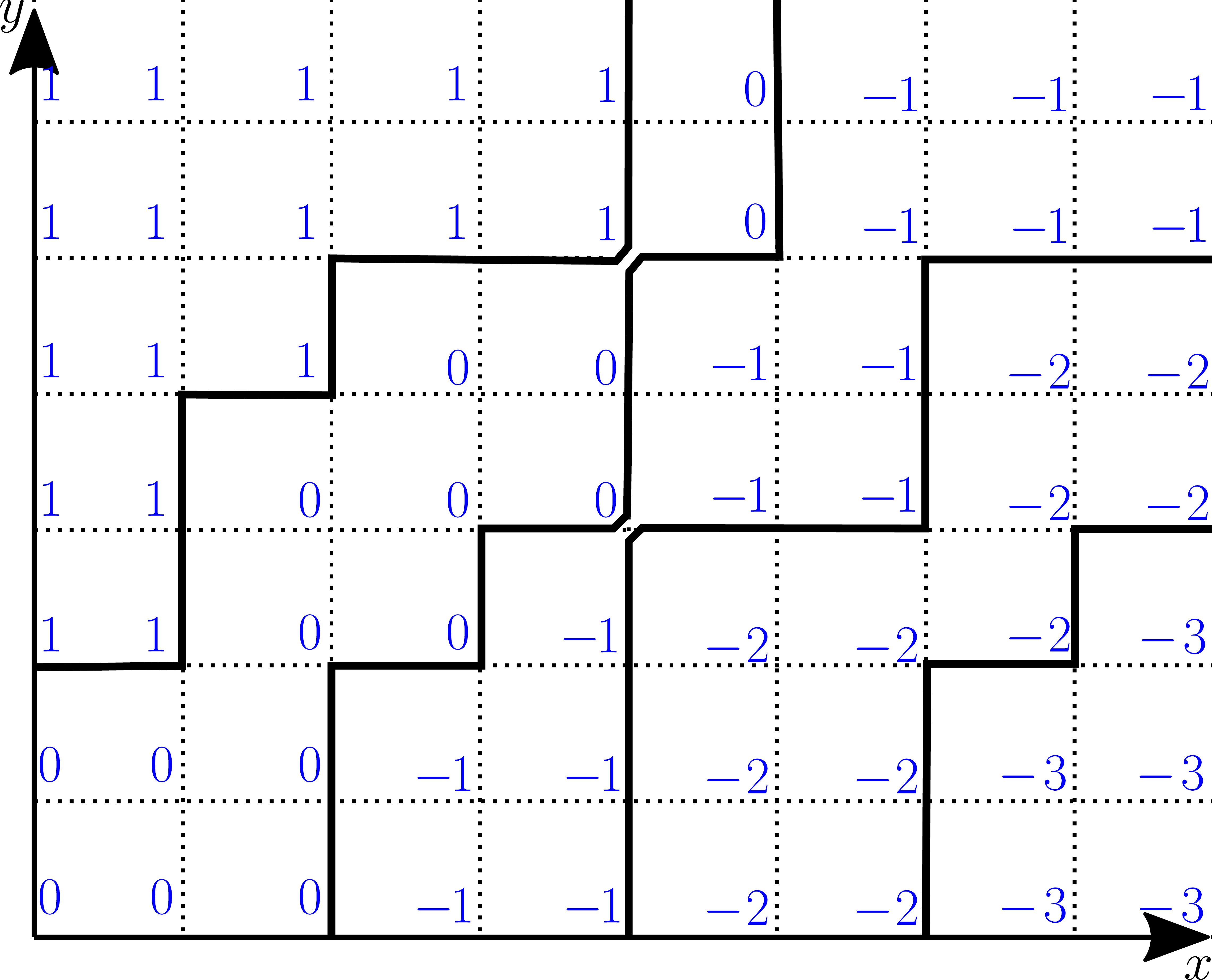}
\caption{The height function.}
\label{fig:6v}
\end{figure}

Recently, Borodin and Gorin \cite{borodin18} proposed a new scaling: with $ L\to\infty $ being the scaling parameter,
\begin{align}
	\label{eq:scaling}
	b_1=\exp\left(-\tfrac{\beta_1}{L}\right), \quad b_2=\exp\left(-\tfrac{\beta_2}{L}\right),
\end{align}
and scale space by $ L $: $ x,y \mapsto L^{-1}x, L^{-1}y $,
where $ \beta_1,\beta_2\in(0,\infty) $,  $ \beta_1\ne \beta_2 $, and fixed.
They showed that, under this scaling, the exponential height function converges to the \ac{TE}.
To state this result precisely, let us prepare some notation.
Set $ q:=b_1/b_2 $, $\q := e^{\beta_1-\beta_2} $ and consider
\begin{align}
	\label{eq:def-phi}
	\phi (x,y) := q^{H(x,y)}= {\q}^{\frac{1}{L} H(x,y)}=  e^{\frac{\beta_1-\beta_2}{L} H(x,y)}.
\end{align}
For given Lipschitz functions $ \chi,\psi:[0,\infty)\to\R $
with $ \chi(0)=\psi(0) $, it is known (\cite[Proposition~4.1,Theorem~4.4]{borodin18}) that  the \ac{TE}
\begin{align}
	\label{eq:te}
	\partial_{xy} \Phi+\beta_2\partial_x \Phi +\beta_1 \partial_y \Phi=0,
	\ x, y > 0,
	\quad
	\Phi(x,0)=\chi(x),\quad \Phi(0,y)=\psi(y).
\end{align}
admits a unique solution.
More explicitly, consider the Riemann function \cite[Eq.~(39)]{borodin18}
\begin{align}
	\label{eq:Rei}
 \Rie(x,y)=\frac{1}{2\pi \img} \oint_{\mathcal{C}} \exp\left[
 (\beta_1-\beta_2) \left(-x \frac{z}{z+ \beta_2} + y \frac{z}{z+\beta_1}\right)
 \right] \frac{(\beta_2-\beta_1)\, dz}{(z+\beta_1)(z+\beta_2)},
\end{align}
where the integration goes in positive direction and encircles $ -\beta_1 $, but not $ -\beta_2 $.
The solution $ \Phi $ of~\eqref{eq:te} is given by 
\begin{align}
\label{eq:Phi:mild}
 \Phi(x,y)
 =\psi(0) \Rie(x,y)+\int_0^y  \Rie(x,y-y')  \big(\psi'(y') + \beta_2 \psi(y') \big) dy'
+\int_0^x \Rie(x-x',y) \big(\chi'(x')+\beta_1\chi(x')\big) dx' .
\end{align}
\begin{defn}
\label{defn:scale}
For given $ f:\Z_{\geq 0}\to\R $ and $ g\in\Z^2_{\geq 0}\to\R $,
let $ f_L(x) := f(Lx) $ and $ g_L(x,y) := g(Lx,Ly) $ denote the corresponding scaled functions,
and linearly interpolate to be functions on $ \Rp $ and $ \Rp^2 $.
Linear interpolation from $ \frac{1}{L}\Z_{\geq 0} $ to $ \R $ is indeed unique.
To linearly interpolate from $ (\frac{1}{L}\Z_{\geq 0})^2 $ to $ \R^2 $, we fix a diagonal direction, say northeast-southwest,
and cut each square $ [i,i+1]\times[j,j+1] $, $ i,j\in\Z_{\geq 0} $, on the integer lattice into two triangles, diagonally along the prescribed direction.
This gives a triangulation on $ \Z_{\geq 0}^2 $ and hence on $ (\frac{1}{L}\Z_{\geq 0})^2 $, from which we construct a unique linear interpolation. 
\end{defn}

\begin{thm}[{\cite[Theorem~5.1]{borodin18}}]
\label{thm:lln}
Fix  Lipschitz functions $ \chi,\psi:[0,\infty)\to\R $,
and let $ \Phi $ be the unique solution of the telegraph equation~\eqref{eq:te} with boundary conditions $ \chi,\psi $,
given by~\eqref{eq:Phi:mild}.
If, as $ L\to\infty $, we have
\begin{align*}
	\sup_{x\in[0,a]}|\phi_L(\Cdot,x)-\chi(x)| \to 0 
	\quad
 	\text{and}
 	\quad
	\sup_{x\in[0,a]}|\phi_L(0,\Cdot) - \psi(x)| \to 0 
,
\end{align*}
fo reach $ a<\infty $, then, as $L\to \infty$,
\begin{align*}
	\sup_{(x,y)\in[0,a]^2} \big| \phi_L(x,y) - \Phi(x,y) \big|
	\longrightarrow_\text{P} 0,
	\quad
	\sup_{(x,y)\in[0,a]^2} \big| \tfrac1L H_L(x,y) - \h(x,y) \big|
	\longrightarrow_\text{P} 0,	
\end{align*}
for each $ a<\infty $, where $\h:=\log_\q \Phi$.
\end{thm}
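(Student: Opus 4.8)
\smallskip
\noindent\emph{Proof strategy.}
The plan is to realize $\phi_L$ as the solution of a \emph{discrete} telegraph equation driven by a small martingale forcing, to invert that equation by a discrete Duhamel formula built from a discrete Riemann function, and then to argue separately that the deterministic part converges to $\Phi$ while the stochastic part vanishes.

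\emph{Step 1 (discrete telegraph equation).} List the sites of $\Z_{\geq 0}^2$ in the sequential order used to build the tiling, and let $\filtM_{(x,y)}$ be the $\sigma$-field generated by the vertices strictly preceding $(x,y)$; it determines the two incoming edges at $(x,y)$ together with $\phi$ at $(x-1,y),(x,y-1),(x-1,y-1)$. Conditionally on $\filtM_{(x,y)}$, the vertex at $(x,y)$ is sampled from the weights of Figure~\ref{tbl:S6VWeights}, and inspecting the (at most two) outcomes in each of the four incoming configurations --- using $q=b_1/b_2$ crucially --- produces constants $\mu_1,\mu_2,\mu_3$, explicit in $b_1,b_2$, with $\mu_1+\mu_2+\mu_3=1$, such that
\[
	\Ex\big[\phi(x,y)\mid\filtM_{(x,y)}\big]=\mu_1\phi(x-1,y)+\mu_2\phi(x,y-1)+\mu_3\phi(x-1,y-1).
\]
Hence $\phi(x,y)=\mu_1\phi(x-1,y)+\mu_2\phi(x,y-1)+\mu_3\phi(x-1,y-1)+\Delta M(x,y)$ with $\{\Delta M(x,y)\}$ a martingale-difference array. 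Under~\eqref{eq:scaling} one has $\mu_1,\mu_2\to1$ and $\mu_3\to-1$, each at rate $1/L$, so the deterministic operator is a consistent discretization of $\partial_{xy}+\beta_2\partial_x+\beta_1\partial_y$. The quantitative point is that a kink (vertex $V$ or $VI$) has probability $O(1/L)$ and changes $q^{H}$ by a factor $q^{\pm1}=1+O(1/L)$, while the a priori bound $|H_L(x,y)|\le L(x+y)$ (height increments being bounded by a single edge occupation) forces $\phi_L$ into a deterministic compact subset $[c_a,C_a]\subset(0,\infty)$ on $[0,a]^2$; consequently $|\Delta M(x,y)|\le C_a/L$ and $\Ex[\Delta M(x,y)^2\mid\filtM_{(x,y)}]\le C_a/L^{3}$ there.

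\emph{Steps 2--3 (deterministic part).} Let $\rie(a,b)$ denote the discrete Riemann function: the solution of the homogeneous recursion with $\rie(0,0)=1$ and the values on the axes induced by that recursion, equivalently $\sum_{a,b\ge0}\rie(a,b)z^aw^b=(1-\mu_1z-\mu_2w-\mu_3zw)^{-1}$, which yields a contour-integral representation parallel to~\eqref{eq:Rei}. Unfolding the recursion of Step~1 down to the two axes writes $\phi_L=\Psi_L+N_L$, where $\Psi_L$ is the explicit combination of $\rie$ with the boundary values $\phi_L(\cdot,0),\phi_L(0,\cdot)$ --- a Riemann-sum discretization of~\eqref{eq:Phi:mild} --- and $N_L(x,y)=\sum_{(x',y')}\rie(Lx-x',Ly-y')\,\Delta M(x',y')$. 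I would then prove that $\rie(\lfloor Lu\rfloor,\lfloor Lv\rfloor)\to\Rie(u,v)$ locally uniformly, with a uniform bound $|\rie|\le C_a$ on $[0,La]^2$: the most direct route is a saddle-point analysis of the contour integral, matched term by term to~\eqref{eq:Rei}; alternatively one shows $\{\Psi_L\}$ is precompact in $C_{\mathrm{loc}}$ (equicontinuity from the recursion and the Lipschitz data), that every subsequential limit is a mild solution of~\eqref{eq:te} with data $\chi,\psi$, and invokes the uniqueness in~\cite[Proposition~4.1]{borodin18}. Together with the hypotheses $\phi_L(\cdot,0)\to\chi$, $\phi_L(0,\cdot)\to\psi$ and dominated convergence in the Duhamel sum, this gives $\sup_{[0,a]^2}|\Psi_L-\Phi|\to0$.

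\emph{Step 4 and conclusion.} Since $N_L(x,y)$ is a martingale in the sequential order, Step~1 gives $\Ex[N_L(x,y)^2]=\sum_{(x',y')}\rie(Lx-x',Ly-y')^2\,\Ex[\Delta M(x',y')^2]\le C_aL^{-3}\cdot O(L^2)\cdot C_a=O(1/L)$, using $\rie=O(1)$; hence $N_L(x,y)\to_{\mathrm P}0$ pointwise, and a maximal inequality for the martingale over a mesh tending to $0$, together with a modulus-of-continuity estimate for $N_L$ read off from the recursion, upgrades this to $\sup_{[0,a]^2}|N_L|\to_{\mathrm P}0$. Thus $\sup_{[0,a]^2}|\phi_L-\Phi|\le\sup|\Psi_L-\Phi|+\sup|N_L|\to_{\mathrm P}0$. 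Finally $\Phi=\q^{\h}$ and $\phi_L$ take values in a fixed compact subset of $(0,\infty)$ on $[0,a]^2$, on which $\log_\q$ is uniformly continuous, so $\sup_{[0,a]^2}|\tfrac1L H_L-\h|=\sup|\log_\q\phi_L-\log_\q\Phi|\to_{\mathrm P}0$. The probabilistic input is light --- the variance bound $\Ex[\Delta M^2]=O(L^{-3})$ is exactly what makes the $O(L^2)$ conditionally centered kinks contribute only $O(L^{-1})$ to $N_L$ --- and the real work, the main obstacle, is the deterministic analysis of the discrete telegraph propagator $\rie$: its uniform boundedness and its convergence to $\Rie$. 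This is delicate precisely because $\mu_3\to-1<0$, so the recursion is not an averaging and a crude iteration, or a crude $L^2$ estimate on the second-order form, blows up exponentially; one must exploit the oscillation and decay encoded in the contour-integral formula for $\rie$, or equivalently reorganize~\eqref{eq:te} into a first-order hyperbolic system for the discrete gradient $\big(\phi_L(x,y)-\phi_L(x-1,y),\ \phi_L(x,y)-\phi_L(x,y-1)\big)$, whose coefficients are bounded and which is amenable to a Gr\"onwall argument.
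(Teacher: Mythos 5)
Your proposal is correct and follows essentially the same route as the proof of the cited result \cite[Theorem~5.1]{borodin18}, whose ingredients are exactly the machinery imported in Section~\ref{sec:prelim}: your discrete telegraph recursion with martingale-difference forcing is the four-point relation behind \eqref{eq:condE-xi}--\eqref{eq:condVar-xi}, your discrete Duhamel decomposition $\phi_L=\Psi_L+N_L$ is \eqref{eq:phi} (i.e.\ \eqref{eq:u}--\eqref{eq:barphi}), your contour-integral analysis of $\rie$ and its convergence to $\Rie$ is Lemma~\ref{lem:bound-R}, and your bounds $|\Delta M|\le C/L$ and $\Ex[\Delta M^2\mid\filtM]\le C/L^{3}$ are Lemma~\ref{lem:cond-mom-xi}, which give the same $O(L^{-1})$ variance for the noise term. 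The only step worth making explicit is a summation by parts in the boundary sums of \eqref{eq:phi}, since the hypothesis controls $\phi_L(\Cdot,0)$ and $\phi_L(0,\Cdot)$ only uniformly and not their discrete gradients; this is handled exactly as in the continuum formula \eqref{eq:Phi:mild}, by moving the discrete derivative onto the kernel $\rie$.
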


As noted in \cite[Remark~5.3]{borodin18},
rewriting the equation~\eqref{eq:te} in terms of $ \h $-derivatives, and sending $ \q\to 0 $,
one obtains a nonlinear PDE that was observed in \cite{BCG2016,ResSri16b}
in the $ L\to\infty $ scaling limit but with $ b_1,b_2 $ fixed. 
Such a nonlinear PDE corresponds to inviscid/hyperbolic scaling limit in the context of hydrodynamic limits.
This is in contrast with the aforementioned \ac{SPDE}-limit results,
where the underlying hydrodynamic limits sit in the viscous/hyperbolic regime.
Given such an intriguing feature,
\cite{borodin18} further investigated the random fluctuations of $ \phi $ and  $ H $ around their respective means.
Our work here follows this study of random fluctuations.

Let $ u(x,y) := \phi(x,y)-\Ex[\phi(x,y)] $.
Let $ U(x,y) $, denote a centered Gaussian field on $ \Rp $, with covariance 
\begin{align*} 
	\Ex [U(x,y)U(x',y')]
	=
	\int_{0}^{x\wedge x'} \int_{0}^{y\wedge y'}
	\Rie(x-\bar x, y-\bar y) \ \Rie(x'-\bar x,y'-\bar y)
	\ 
	\D(\bar x, \bar y) \ d\bar x d\bar y,
\end{align*}
where
\begin{align}
\label{eq:D}
	\D(x,y) :=
	 (\beta_1+\beta_2) \partial_{ x} \Phi \cdot \partial_{ y}\Phi
		+ \beta_2 (\beta_2 -\beta_1) \partial_{ x} \Phi \cdot \Phi
		- \beta_1 (\beta_2 -\beta_1)  \Phi\cdot\partial_{ y} \Phi.
\end{align}
The following is our main result.

\begin{thm} 
\label{thm:main}
Under the same assumptions as in Theorem~\ref{thm:lln}, as $ L\to\infty $,
\begin{align*}
	\sqrt{L} u_L \to U
	\text{ in finite dimensional distributions}.
\end{align*}
\end{thm}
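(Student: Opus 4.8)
The plan is to derive a discrete stochastic evolution equation for the centered field $u$, identify its ``driving martingale'', and then pass to the limit. First I would revisit the sequential update rule for the stochastic six vertex model. Conditioning on the incoming line configuration at a site $(x,y)$, the increment of $\phi$ across that vertex has a conditional mean dictated by the weights and a conditional fluctuation. Writing $\phi(x,y)$ in terms of the values on the boundary of the rectangle $[0,x]\times[0,y]$, one obtains a discrete Duhamel-type identity: $\phi_L$ equals a deterministic part (converging to $\Phi$ by Theorem~\ref{thm:lln}) plus a discrete stochastic integral against a martingale difference array $\dnoise$, where each increment of $\dnoise$ is the centered single-vertex fluctuation, against a discrete Riemann kernel $\rie$. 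Subtracting expectations, $\sqrt{L}\,u_L$ becomes a discrete stochastic convolution $\sqrt{L}\int \rie(x-\bar x, y-\bar y)\,\dnoise(d\bar x, d\bar y)$ plus a remainder that I would need to show is negligible in probability.

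The core analytic input is twofold. First, the discrete Riemann function $\rie$ must be shown to converge, locally uniformly, to $\Rie$ from~\eqref{eq:Rei}; this should follow from a contour-integral representation of $\rie$ (the discrete analogue of the $b_1,b_2$-dependent recursion) together with a steepest-descent / Taylor expansion of the integrand under the scaling~\eqref{eq:scaling}, exactly as in \cite{borodin18}. Second, and more delicate, is the quadratic-variation computation: I would compute $\Ex[(\Delta\dnoise)^2 \mid \filtM]$ at a site and show, using Theorem~\ref{thm:lln} to replace $\phi_L$, $\partial_x\phi_L$, $\partial_y\phi_L$ by their deterministic limits $\Phi,\partial_x\Phi,\partial_y\Phi$, that the rescaled predictable quadratic variation of the discrete stochastic integral converges to $\int\int \Rie(x-\bar x,y-\bar y)\Rie(x'-\bar x,y'-\bar y)\,\D(\bar x,\bar y)\,d\bar x\,d\bar y$, with $\D$ precisely as in~\eqref{eq:D}. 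The three terms in~\eqref{eq:D} should emerge as the three distinct bilinear combinations of incoming-edge indicators weighted by $b_1,b_2$ and their $O(1/L)$ expansions. The cross-covariance at two distinct points $(x,y)\ne(x',y')$ comes out of the same computation since the martingale increments at distinct sites are conditionally uncorrelated.

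With these in hand, the convergence in finite-dimensional distributions follows from a martingale central limit theorem: I would linearly order the sites as in the text, view the partial sums defining $\sqrt{L}(u_L(x_1,y_1),\dots,u_L(x_k,y_k))$ as a vector-valued martingale, verify (i) the convergence of the predictable quadratic variation and covariation to the claimed Gaussian covariance matrix (the step above), and (ii) a conditional Lindeberg / Lyapunov bound, which is immediate since each single-vertex increment of $\dnoise$ is bounded by $O(1/L)$ deterministically while there are $O(L^2)$ of them, so fourth moments of the rescaled sums are $O(1/L)\to 0$. The main obstacle I anticipate is step (i): controlling the error when one substitutes the random fields $\phi_L,\partial_x\phi_L,\partial_y\phi_L$ appearing inside the quadratic variation by their hydrodynamic limits, since Theorem~\ref{thm:lln} only gives convergence of $\phi_L$ itself (not of its discrete derivatives) in probability. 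Handling this likely requires an a priori bound — e.g. a uniform-in-$L$ Hölder or gradient estimate on $\phi_L$, or a direct second-moment estimate on the discrete derivatives — so that a stochastic-dominated-convergence argument closes the gap; establishing that estimate is where most of the work will go.
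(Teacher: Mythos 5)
Your architecture matches the paper's: the discrete Duhamel representation \eqref{eq:phi} writes $u$ as a discrete stochastic convolution of $\rie$ against the martingale-difference array $\dnoise$; one linearly orders the sites, applies the martingale CLT, kills the Lindeberg condition with the deterministic bound $|\dnoise|\le cL^{-1}$, and reduces everything to convergence of the predictable quadratic variation with the three terms of \eqref{eq:D} coming from \eqref{eq:condVar-xi}. You also correctly locate the hard step. But the remedies you propose for it would not close the gap, and the two ideas that actually do are absent. For the deterministic part, replacing $L\nabla_x\barphi(Lx,Ly)$ by $\partial_x\Phi(x,y)$ cannot be done pointwise no matter how good an a priori gradient or H\"older bound you prove: since $\chi,\psi$ are only Lipschitz, their derivatives enter \eqref{eq:barphi} and \eqref{eq:Phi:mild} in a way that makes $L\nabla_x\barphi(Lx,Ly)\not\to\partial_x\Phi(x,y)$ in general. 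The convergence only holds in a weak sense, after pairing against the smooth kernel $\Rie(\x_i-\cdot,\y_i-\cdot)\Rie(\x_j-\cdot,\y_j-\cdot)$ and moving all derivatives off $\barphi$ (and off $\chi,\psi$) by integration by parts and its discrete analogue, summation by parts; only then does Theorem~\ref{thm:lln} identify the limit. Dominated convergence with a uniform bound gives boundedness, not the identification.

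For the random part, a direct second-moment estimate on the discrete gradients also fails: the fluctuation $L\nabla_x\phi-L\nabla_x\barphi$ contains a boundary contribution of the form $L\sum_{y'}\rieI(0,y-y')\,\dnoise(x+1,y')$ whose variance is $O(1)$, not $o(1)$ (the prefactor $L^2$ exactly cancels the $L\cdot L^{-3}$ scaling of the single-row sum). So the substitution $\phi\mapsto\barphi$ inside the quadratic variation is not small pointwise in $L^2$; it becomes small only after averaging over the $O(L^2)$ sites in the quadratic-variation sum, because these boundary terms at distinct columns and rows decorrelate. Making that precise requires a fourth-moment computation classifying the coincidence patterns of the four noise points and counting contractions. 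Without the summation-by-parts device for the mean and the decorrelation/contraction argument for the boundary fluctuations, the condition (QV) is not verified and the proposed proof is incomplete.
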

\begin{cor} 
\label{cor:main}
Under the same assumptions as in Theorem~\ref{thm:lln},
\begin{align*}
	\tfrac{1}{\sqrt{L}}\big( H(Lx,Ly) - \Ex[H(Lx,Ly)] \big) 
	\to \tilde U(x,y)
	:=
	\frac{U(x,y)}{(\beta_1-\beta_2) \Phi(x,y)}
	\text{ in finite dimensional distributions}.
\end{align*}
\end{cor}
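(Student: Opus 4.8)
The plan is to deduce Corollary~\ref{cor:main} from Theorem~\ref{thm:main} by a delta-method argument, exploiting the exact conjugacy $ \phi = e^{\frac{\beta_1-\beta_2}{L}H} $ between the height and its exponential. Fix points $ (x_1,y_1),\ldots,(x_k,y_k)\in\Rp^2 $. At lattice points one has $ \tfrac1L H(Lx,Ly)=g(\phi(Lx,Ly)) $ with $ g(s):=\tfrac{1}{\beta_1-\beta_2}\log s $ (note $ g(\Phi)=\h $ and $ g'(s)=((\beta_1-\beta_2)s)^{-1} $), and after the interpolation of Definition~\ref{defn:scale} the linearly interpolated fields $ \tfrac1L H_L $ and $ g(\phi_L) $ still agree up to an error of order $ 1/L $, since $ q=1+O(1/L) $ forces $ \phi $, hence $ \tfrac1L H $, to vary by $ O(1/L) $ across each interpolation triangle; this error is harmless after multiplying by $ \sqrt L $. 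It therefore suffices to prove that, jointly in $ i\le k $, $ \sqrt L\,\big(g(\phi_L(x_i,y_i))-\Ex[g(\phi_L(x_i,y_i))]\big)\longrightarrow \tilde U(x_i,y_i) $ in distribution.

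Abbreviate $ \phi_{L,i}:=\phi_L(x_i,y_i) $, $ m_{L,i}:=\Ex[\phi_{L,i}] $, $ u_{L,i}:=u_L(x_i,y_i)=\phi_{L,i}-m_{L,i} $. Since the height function is Lipschitz-$ 1 $ on the lattice and $ H(0,0)=0 $, we have $ 0\le H(Lx,Ly)\le L(x+y) $, so $ \phi_{L,i} $ and $ m_{L,i} $ lie in a fixed compact subinterval $ [c_i,C_i]\subset(0,\infty) $ for all large $ L $; in particular all moments exist and $ g' $, $ g'' $ are bounded along the arguments below. A second-order Taylor expansion of $ g $ about $ m_{L,i} $, together with $ \Ex[u_{L,i}]=0 $, gives
\begin{align*}
	\sqrt L\,\big(g(\phi_{L,i})-\Ex[g(\phi_{L,i})]\big)
	= g'(m_{L,i})\,\sqrt L\,u_{L,i}
	+ \tfrac12 g''(\zeta_{L,i})\,\sqrt L\,u_{L,i}^2
	- \tfrac{\sqrt L}{2}\,\Ex\big[g''(\zeta_{L,i})\,u_{L,i}^2\big],
\end{align*}
with $ \zeta_{L,i} $ between $ m_{L,i} $ and $ \phi_{L,i} $.

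For the leading term, Theorem~\ref{thm:lln} and bounded convergence (the $ \phi_{L,i} $ being uniformly bounded) give $ m_{L,i}\to\Phi(x_i,y_i) $, hence $ g'(m_{L,i})\to\big((\beta_1-\beta_2)\Phi(x_i,y_i)\big)^{-1} $; combined with the joint convergence $ \big(\sqrt L\,u_{L,i}\big)_{i\le k}\to\big(U(x_i,y_i)\big)_{i\le k} $ of Theorem~\ref{thm:main} and Slutsky's theorem, this term converges jointly to $ \big(U(x_i,y_i)/((\beta_1-\beta_2)\Phi(x_i,y_i))\big)_{i\le k}=\big(\tilde U(x_i,y_i)\big)_{i\le k} $. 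The second (pathwise) remainder tends to $ 0 $ in probability: $ \sqrt L\,u_{L,i} $ is tight (it converges in law) while $ u_{L,i}\to0 $ in probability, so $ \sqrt L\,u_{L,i}^2=(\sqrt L\,u_{L,i})\,u_{L,i}\to0 $ in probability and $ g''(\zeta_{L,i}) $ is bounded. The third (deterministic) remainder is bounded in absolute value by $ C_i\,\sqrt L\,\Ex[u_{L,i}^2] $, which vanishes provided $ \Ex[u_{L,i}^2]=O(1/L) $, the natural order of the variance. Assembling the three contributions and using Slutsky once more (adding an $ o_{\mathrm P}(1) $ vector to a convergent one) yields the claim; Corollary~\ref{cor:main} then follows by recalling $ \tfrac1L H_L=g(\phi_L) $ up to the $ O(1/L) $ interpolation error absorbed above.

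The one genuinely delicate point is the deterministic (Jensen-gap) remainder $ \tfrac{\sqrt L}{2}\Ex[g''(\zeta_{L,i})u_{L,i}^2] $: finite-dimensional convergence of $ \sqrt L\,u_L $ by itself does not control $ \Ex[u_{L,i}^2] $, so one must invoke the $ O(1/L) $ variance estimate (equivalently, uniform integrability of $ L\,u_{L,i}^2 $) that comes with the second-moment analysis behind Theorem~\ref{thm:main}. The remaining ingredients — uniform positivity and boundedness of $ \phi_L $ on compacts, the convergence $ m_{L,i}\to\Phi(x_i,y_i) $, and the interpolation bookkeeping — are routine.
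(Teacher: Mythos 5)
Your proposal is correct and follows essentially the same route as the paper: a first-order Taylor (delta-method) expansion of the logarithm around $\barphi_L=\Ex[\phi_L]$, with the leading term handled by Theorem~\ref{thm:main} plus $\barphi_L\to\Phi$, and the quadratic remainder killed by the variance bound $\Ex[u^2]\le cL^{-1}$. The one step you flag but do not carry out — that variance bound — is exactly what the paper supplies, in two lines, from the representation~\eqref{eq:u}, the uncorrelatedness of $\dnoise$, and the bounds of Lemmas~\ref{lem:bound-R} and~\ref{lem:cond-mom-xi}.
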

\begin{rmk}
It is readily checked that
\begin{align*}
	U(x,y)
	 \stackrel{\text{law}}{=} 
	\int_{0}^{x} \int_{0}^{y} \Rie(x-x',y-y')\noise(x',y') \sqrt{D(x',y')} dx'dy',
\end{align*}
where $ \noise(x,y) $ denotes the Gaussian  white noise on $ \Rp^2 $.
Given such stochastic integral representation, we can also view $ U $
as the solution of the \ac{STE} with zero boundary condition, i.e.,
\begin{align}
	\label{eq:ste}
	\partial_{xy} U+\beta_2\partial_x U+\beta_1 \partial_y U = \sqrt{D}\noise,
	\quad
	x, y \geq 0,
	\quad
	U(x,0)=U(0,y)=0.
\end{align}
Alternatively,
substitute $ U=(\beta_1-\beta_2) \Phi \tilde{U} $ in~\eqref{eq:ste} and using~\eqref{eq:te},
we have the equation for $ \tilde{U} $:
\begin{align} \label{e:tildeUeq}
  \partial_{xy}\tilde{U}+\beta_1 \partial_{y}\tilde{U}
  + \beta_2 \partial_{x}\tilde{U}
   +(\beta_1-\beta_2) ( \partial_{y}\tilde{U} \partial_{x}\h + \partial_{x}\tilde{U}  \partial_{y}\h)
  =  \xi \cdot \sqrt{
  (\beta_1+\beta_2)
 \partial_{x}\h \partial_{y}\h
   - \beta_2 \, \partial_{x}\h
   + \beta_1\, \partial_{y}\h} .
\end{align}
\end{rmk}

Corollary~\ref{cor:main} was conjectured in \cite[Conjecture 6.1]{borodin18},
based on observations through a four point relation, and (separately) through  a variational principle and contour integrals.
For the low density regime (see \cite[Section~7]{borodin18} for the precise meaning), 
the analog of Corollary~\ref{cor:main} was established in \cite[Theorem~7.1]{borodin18}.
The main step toward proving such Gaussian limits is to show convergence of the variance.
Referring to~\eqref{eq:D}, we see that the variance involves $ \Phi $ and its gradients:
one term is quadratic in gradients, and the other terms are linear in gradients. 
In the low density regime, the quadratic-gradient term vanishes in the limit $ L\to\infty $,
and, through integration by parts,
\cite{borodin18} reduced convergence of the linear-gradient terms to convergence of $ \phi $
(i.e., the law of large numbers result in Theorem~\ref{thm:lln}),
whereby showing the convergence of $ u $.

For the general case (i.e., non-low-density) considered here,
one needs to address the convergence of the quadratic-gradient term.
The main tool we use here is the discrete, integrated form~\cite[Eq~(85)]{borodin18} of the \ac{STE}.
From this equation we develop expressions of discrete gradients of $ \phi $.
These expressions permit calculations of moments of the terms in question,
and from this we obtain decorrelation through contracting the discrete analog of $ \xi $.

\subsection*{Acknowledgements}
The authors thank Ivan Corwin for helpful discussions.
This work is initiated in the conference \textit{Integrable Probability Boston 2018}, in May 14-18, 2018 at MIT,
which is supported by the NSF through DMS-1664531, DMS-1664617, DMS-1664619, DMS-1664650.
Hao Shen is partially supported by the NSF through DMS:1712684.
Li-Cheng Tsai is partially supported by the NSF through DMS-1712575 and the Simons Foundation through a Junior Fellowship. 

\section{Preliminary}
\label{sec:prelim}
In this section we prepare a few tools for subsequent analysis.
Recall from \cite[Eq.~(45)]{borodin18} the discrete Riemann function
\begin{align}		
	\label{eq_Discrete_R}
 \rie(x,y)=\frac{1}{2\pi \img} \oint_{-\frac{1}{b_2(1-b_1)}} \left(\frac{1+  b_1(1-b_1)z}{1+ b_2(1-b_1) z}
  \right)^{x} \left( \frac{1+b_2 (1-b_2)z}{1+b_1(1-b_2)z} \right)^{y}
 \frac{ (b_2-b_1)\, dz}{(1+ b_2(1-b_1) z)(1+ b_1(1-b_2) z)},
\end{align}
where the integration goes in positive direction and encircles $-\frac{1}{b_2(1-b_1)}$, but not $-\frac{1}{b_1(1-b_2)}$.
We will also be using the notation $ \rieI(x,y) := \rie(x,y) \ind_\set{x\geq0} \ind_\set{y\geq 0} $.
Recall from \cite[Eq~(85)]{borodin18} the following integrated representation of $ \phi $, 
\begin{align}
\begin{split}
	\label{eq:phi}
	\phi&(x,y)
	= 
	\phi(0,0) \rie(x,y)
	+
  	\sum_{y'\in\Z_{> 0}} \rieI(x,y-y') \big(\phi(0,y')-b_2 \phi(0,y'-1)\big)
\\
	&+
  	\sum_{x'\in\Z_{> 0}} \rieI(x-x',y) \big(\phi(x',0)-b_1 \phi(x'-1,0)\big)
  	+\sum_{x',y'\in\Z_{> 0}} \rieI(x-x',y-y') \dnoise(x',y').
\end{split}
\end{align}
Here, $ \dnoise(x,y) $ is a process on $ \Z^2_{> 0} $ that plays the role of $ \xi $ (spacetime white noise) in the discrete setting.
In particular, with $ \nabla_x f(x) := f(x+1)-f(x) $ denoting the forward discrete gradient acting on a designated variable $ x $, 
recall from \cite[Theorem~3.1]{borodin18} that
\begin{align}
	\label{eq:condE-xi}
	&\Ex\big[ \dnoise(x+1,y+1) \mid
 	H(u,v), u\le x \text{ or } v\le y \big]=0,
\\ 
\begin{split}
	\label{eq:condVar-xi}
   &\Ex \big[ {\dnoise} (x+1,y+1)^2 \mid H(u,v), u\le x \text{ or } v\le y  \big]=
   \big(b_2(1-b_1)+b_1(1-b_2)\big) \nabla_x \phi (x,y)\nabla_y \phi(x,y)\\
   &\quad\quad+ b_1(1-b_2)(1-q) \phi(x,y)\nabla_x \phi(x,y)
   - b_1(1-b_1)(1-q) \phi(x,y) \nabla_y \phi(x,y).
\end{split}
\end{align}
Set $ \barphi(x,y) := \Ex[\phi(x,y)] $.
Indeed, since, on the r.h.s.\ of~\eqref{eq:phi}, only the last term is random, we have
\begin{align}
	\label{eq:u}
	u(x,y)
	&:=
	\phi(x,y) - \Ex[\phi(x,y)]
	=
	\sum_{x',y'\in\Z_{> 0}} \rieI(x-x',y-y') \dnoise(x',y'),
\\	
	\label{eq:barphi}
	\begin{split}
		\barphi(x,y)
		&=
		\phi(0,0) \rie(x,y)
		+
	  	\sum_{y'\in\Z_{> 0}} \rieI(x,y-y') \big(\phi(0,y')-b_2 \phi(0,y'-1)\big)
	\\
		&
		\hphantom{=\phi(0,0) \rie(x,y)}
		+
	  	\sum_{x'\in\Z_{> 0}} \rieI(x-x',y) \big(\phi(x',0)-b_1 \phi(x'-1,0)\big).
  	\end{split}
\end{align}

Hereafter, we use $ c(a,b,\ldots)<\infty $ to denote a generic finite constant
that may change from line to line, but depends only on the designated variables $ a,b,\ldots $.
The parameter $ \beta_1\neq \beta_2 $ are considered fixed, so their dependence will be omitted.
\begin{lem} \label{lem:bound-R}
For any $k=(k_1,k_2)\in \Z_{\geq 0}^2$ we write $|k|=k_1+k_2$,
and $\partial^k=\partial_x^{k_1}\partial_y^{k_2}$
and $\nabla^k =\nabla_x^{k_1} \nabla_y^{k_2}$.
Given any $m\in \Z_{>0}$ and $ a<\infty $,
\begin{align}
	\label{eq:Riebdd}
	&\sum_{0\le |k| \le m} |\partial^k \Rie(x,y) |  \le c(a,m),
	\quad
	\forall (x,y) \in[0,a]^2,
\\
	\label{eq:riebdd}
	& \sum_{0\le |k| \le m} |L^k \nabla^k \rie(x,y) |  \le c(a,m),
	\quad
	\forall 
	(x,y) \in([0,aL]\cap\Z)^2,
\\
	\label{eq:rietoRie}
	&
	\lim_{L\to \infty}\sup_{(x,y)\in ([0,aL]\cap\Z)^2} \sum_{0\le |k| \le m}
	|\partial^k \Rie(\tfrac{x}{L},\tfrac{y}{L}) -L^k \nabla^k \rie(x,y)| =0.
\end{align}
\end{lem}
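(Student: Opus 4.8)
The plan is to prove the three estimates by working directly with the contour representations~\eqref{eq:Rei} and~\eqref{eq_Discrete_R}, differentiating (for~\eqref{eq:Riebdd}), resp.\ taking discrete differences (for~\eqref{eq:riebdd}--\eqref{eq:rietoRie}), under the integral sign. The bound~\eqref{eq:Riebdd} is immediate: differentiating~\eqref{eq:Rei}, $\partial_x$ pulls out the factor $-(\beta_1-\beta_2)\frac{z}{z+\beta_2}$ and $\partial_y$ the factor $(\beta_1-\beta_2)\frac{z}{z+\beta_1}$, so $\partial^k\Rie(x,y)$ is the same contour integral with the extra factor $\big(-(\beta_1-\beta_2)\frac{z}{z+\beta_2}\big)^{k_1}\big((\beta_1-\beta_2)\frac{z}{z+\beta_1}\big)^{k_2}$. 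Since $\mathcal{C}$ is a fixed compact loop that avoids both $-\beta_1$ and $-\beta_2$, this extra factor and the prefactor $\frac{\beta_2-\beta_1}{(z+\beta_1)(z+\beta_2)}$ are bounded on $\mathcal{C}$ by a constant depending only on $m$, while the modulus of the exponential is $\exp\big[\mathrm{Re}\big((\beta_1-\beta_2)(-x\frac{z}{z+\beta_2}+y\frac{z}{z+\beta_1})\big)\big]\le e^{c(a)}$ for $(x,y)\in[0,a]^2$. Bounding each integral by the length of $\mathcal{C}$ times the supremum of the modulus of its integrand and summing over $0\le|k|\le m$ gives~\eqref{eq:Riebdd}.

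For~\eqref{eq:riebdd} the obstruction is that the loop in~\eqref{eq_Discrete_R} around $-\frac{1}{b_2(1-b_1)}$ escapes to infinity as $L\to\infty$, so one cannot difference on a fixed contour. I would remove the $L$-dependence by an explicit Möbius change of variables $z=z_L(w)$ sending a fixed, $L$-independent loop $\widetilde{\mathcal C}$ around $-\beta_1$ to a contour that carries $-\beta_1\mapsto-\frac{1}{b_2(1-b_1)}$ and $-\beta_2\mapsto-\frac{1}{b_1(1-b_2)}$ and that, for $L$ large, winds once around $-\frac{1}{b_2(1-b_1)}$ and not at all around $-\frac{1}{b_1(1-b_2)}$; one then takes $z_L(\widetilde{\mathcal C})$ as the contour in~\eqref{eq_Discrete_R}. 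Under this substitution $L^k\nabla^k$ applied to $\rie$ equals the transformed contour integral with the extra factor $\big(L\,(\tfrac{1+b_1(1-b_1)z_L(w)}{1+b_2(1-b_1)z_L(w)}-1)\big)^{k_1}\big(L\,(\tfrac{1+b_2(1-b_2)z_L(w)}{1+b_1(1-b_2)z_L(w)}-1)\big)^{k_2}$, since $\nabla_x$ only acts on the $x$-argument of $\rie$, turning $c^x$ into $(c-1)c^x$; expanding $b_i=e^{-\beta_i/L}$ shows each of the two factors here stays bounded uniformly in $L$ on $\widetilde{\mathcal C}$, with limits $-(\beta_1-\beta_2)\frac{w}{w+\beta_2}$ and $(\beta_1-\beta_2)\frac{w}{w+\beta_1}$ — exactly the factors of the continuous case. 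The undifferenced kernel $\big(\tfrac{1+b_1(1-b_1)z}{1+b_2(1-b_1)z}\big)^{x}\big(\tfrac{1+b_2(1-b_2)z}{1+b_1(1-b_2)z}\big)^{y}$ has modulus $\exp\big[x\,\mathrm{Re}\log(\cdots)+y\,\mathrm{Re}\log(\cdots)\big]$, and since each logarithm is $O(1/L)$ uniformly on $\widetilde{\mathcal C}$ while $x,y\le aL$, this is bounded uniformly in $L$ over $\widetilde{\mathcal C}\times([0,aL]\cap\Z)^2$; combined with the uniformly bounded Jacobian-weighted measure $(b_2-b_1)z_L'(w)\big[(1+b_2(1-b_1)z_L(w))(1+b_1(1-b_2)z_L(w))\big]^{-1}$, summing over $0\le|k|\le m$ yields~\eqref{eq:riebdd}.

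Finally,~\eqref{eq:rietoRie} amounts to upgrading all of these convergences to uniform ones inside the transformed integral. By Taylor's theorem with a remainder uniform on $\widetilde{\mathcal C}$, $L\log\tfrac{1+b_1(1-b_1)z_L(w)}{1+b_2(1-b_1)z_L(w)}\to-(\beta_1-\beta_2)\frac{w}{w+\beta_2}$ and $L\log\tfrac{1+b_2(1-b_2)z_L(w)}{1+b_1(1-b_2)z_L(w)}\to(\beta_1-\beta_2)\frac{w}{w+\beta_1}$ uniformly on $\widetilde{\mathcal C}$; multiplying by $x=Lx'$, $y=Ly'$ with $x',y'\in[0,a]$ and exponentiating, the undifferenced kernel converges uniformly to the exponential in~\eqref{eq:Rei}, and likewise the scaled difference factors and the measure converge uniformly. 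Since $\widetilde{\mathcal C}$ is compact, one passes the limit under the integral, dominated by the $m=0$ bounds just obtained, to get $L^k\nabla^k\rie(x,y)\to\partial^k\Rie(\tfrac xL,\tfrac yL)$ uniformly over $(x,y)\in([0,aL]\cap\Z)^2$. The main difficulty, I expect, is the bookkeeping around the change of variables: producing a concrete $z_L$ that realizes all the claimed uniform expansions and has the correct winding (and, should a convenient normalization instead place $\widetilde{\mathcal C}$ around $-\beta_2$, invoking the residue theorem together with the $O(|z|^{-2})$ decay of the integrand at infinity to move the contour to the one around $-\beta_1$ in~\eqref{eq:Rei}).
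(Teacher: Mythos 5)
Your proposal is correct and follows essentially the same route as the paper: bound the continuous kernel by differentiating under the integral on a fixed contour, rescale the integration variable in the discrete formula (the paper uses the linear map $z=L\tilde z/(\beta_1\beta_2)$, a special case of your M\"obius map) so that the two poles converge to $-\beta_2$ and $-\beta_1$ on an $L$-independent contour, show uniform convergence of each factor including the ones produced by $L\nabla_x$ and $L\nabla_y$, and deduce~\eqref{eq:riebdd} from the resulting uniform bounds. The bookkeeping issue you flag at the end is real --- the rescaled contour ends up encircling $-\beta_2$ rather than the $-\beta_1$ of~\eqref{eq:Rei} --- and the paper resolves it exactly as you anticipate, by noting the integrand has no pole at infinity and deforming the contour through infinity (which flips the orientation).
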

\begin{proof}
Consider the formula \eqref{eq:Rei} for $\Rie$,
and \emph{fix} a contour $ \mathcal{C} $ as described therein.
This is a closed curve of finite length, and along the contour $ z\in\mathcal C $ are bounded in absolute value, i.e., $ |z|\leq c $.
Each of the factors in the integrand is bounded over $[0,a]^2$ uniformly in $ z\in \mathcal C $.
Moreover, each $\partial_x$ brings down a factor
$- \frac{(\beta_1-\beta_2)  z}{z+ \beta_2}$
and each $\partial_y$ brings down a factor
$\frac{(\beta_1-\beta_2)  z}{z+ \beta_1}$;
these are all bounded uniformly in $z\in \mathcal C$.
From these discussions we conclude~\eqref{eq:Riebdd}.


Noting that~\eqref{eq:riebdd} follows from~\eqref{eq:Riebdd} and~\eqref{eq:rietoRie},
we now move on to proving~\eqref{eq:rietoRie}.
Apply changes of variables to \eqref{eq_Discrete_R}:
$z=L\tilde z/(\beta_1\beta_2)$, $x=L\tilde x$, $y=L \tilde y$.
Then
\begin{align}
\label{eq:riechange}
\begin{split}
 \rie(\tilde{x},\tilde{y})
 =
 \frac{1}{2\pi \img} 
 \oint_{-\frac{\beta_1\beta_2}{L b_2(1-b_1)}}
  &\left(\frac{\beta_1\beta_2+  L b_1(1-b_1) \tilde z}{\beta_1\beta_2+ L b_2(1-b_1) \tilde z}
  \right)^{L\tilde x} 
\left( \frac{\beta_1\beta_2+L b_2 (1-b_2)\tilde z}{\beta_1\beta_2+L b_1(1-b_2)\tilde z} \right)^{L\tilde y}
\\
  &
 \cdot\,\frac{ L\beta_1\beta_2(b_2-b_1)\, d\tilde z}{(\beta_1\beta_2+ L b_2(1-b_1) \tilde z)(\beta_1\beta_2+ L b_1(1-b_2) \tilde z)},
\end{split}
\end{align}
where the integration goes in positive
direction and encircles $-\frac{\beta_1\beta_2}{L b_2(1-b_1)}$ 
but not $-\frac{\beta_1\beta_2}{Lb_1(1-b_2)}$.
Indeed, as $ L\to\infty $,
$ -\frac{\beta_1\beta_2}{L b_2(1-b_1)} \to -\beta_2 $ and $ -\frac{\beta_1\beta_2}{Lb_1(1-b_2)}\to-\beta_1 $.
This being the case, we fix a contour $\mathcal C'$ (\emph{independently} of $ L $)
that goes in the positive direction encircling $ -\beta_2 $ but not $ -\beta_1 $.
It is readily checked that, \emph{uniformly} over $ \tilde{z}\in\mathcal{C}' $ and $ \tilde{x},\tilde{y}\in [0,a] $, as $ L\to\infty $,
\begin{align*}
	\left(\frac{\beta_1\beta_2+  L b_1(1-b_1) \tilde z}{\beta_1\beta_2+ L b_2(1-b_1) \tilde z} \right)^{L\tilde x}
	&\longrightarrow
	\exp\Big( (\beta_1-\beta_2) \Big(-\tilde{x} \frac{\tilde{z}}{\tilde{z}+ \beta_2} \Big) \Big),
\\
	\left( \frac{\beta_1\beta_2+L b_2 (1-b_2)\tilde z}{\beta_1\beta_2+L b_1(1-b_2)\tilde z} \right)^{L\tilde y}
	&\longrightarrow
	\exp\Big( (\beta_1-\beta_2) \Big(-\tilde{y} \frac{\tilde{z}}{\tilde{z}+ \beta_1} \Big) \Big),
\\
	\frac{ L\beta_1\beta_2(b_2-b_1)}{(\beta_1\beta_2+ L b_2(1-b_1))(\beta_1\beta_2+ L b_1(1-b_2) \tilde z)}
	&\longrightarrow
	-\frac{(\beta_2-\beta_1)}{(\tilde{z}+\beta_1)(\tilde{z}+\beta_2)}.
\end{align*}
Using this in~\eqref{eq:riechange} gives, \emph{uniformly} over $ \tilde{x},\tilde{y}\in [0,a] $ as $ L\to\infty $,
\begin{align}
\label{eq:riechange:}
\begin{split}
 \rie(\tilde{x},\tilde{y})
 \longrightarrow
 -\frac{1}{2\pi \img} 
 \oint_{\mathcal{C}'}
 \exp\Big( (\beta_1-\beta_2) \Big(-\tilde{x} \frac{\tilde{z}}{\tilde{z}+ \beta_2}+\tilde{y} \frac{\tilde{z}}{\tilde{z}+ \beta_1} \Big) \Big)
\frac{(\beta_2-\beta_1)\,d\tilde{z}}{(\tilde{z}+\beta_1)(\tilde{z}+\beta_2)}.
\end{split}
\end{align}
Note that, compared to~\eqref{eq:Rei},
the r.h.s.\ of~\eqref{eq:riechange:} has a different contour $ \mathcal{C}' $, and an outstanding negative sign.
However, as noted in \cite{borodin18} (see comments after Equation~(39) therein),
the integrand in \eqref{eq:Rei} and \eqref{eq:riechange:} has no pole at $ |\tilde{z}|=\infty $,
so the contour $ \mathcal{C}' $ can be deformed to $ -\mathcal{C} $ (the orientation changes after deformation),
matching the r.h.s.\ of~\eqref{eq:riechange:} to \eqref{eq:Rei}.
This proves \eqref{eq:rietoRie} for $ |k|=0 $.
%
As for $ |k|>0 $,
note that each $L\nabla_x$ applied to \eqref{eq_Discrete_R} brings a factor $ \frac{L(b_1-b_2)(1-b_1)z}{1+b_2(1-b_1)z} $, 
and each $ L\nabla_y $ brings a factor $ \frac{L(b_2-b_1)(1-b_2)z}{1+b_1(1-b_2)z} $. 
These factors converges uniformly over $ \tilde{z}\in\mathcal{C}' $ to
$ \frac{(\beta_1-\beta_2)  \tilde z}{\tilde z+ \beta_2} $ and $ \frac{(\beta_1-\beta_2)  \tilde z}{\tilde z+ \beta_1 } $, respectively.
Hence~\eqref{eq:rietoRie} follows.
\end{proof}

\begin{lem} \label{lem:apribd}
Given $ a<\infty $, we have, for all $(x,y)\in ([0,aL]\cap \Z)^2$,
\begin{align}
	\label{eq:phi:apribd}
	|\phi(x,& y)| + |L\nabla_x\phi(x,y)| + |L\nabla_y\phi(x,y)|
  	\le c(a) 
\\
	\label{eq:barphi:apribd}
    |\barphi(x,& y)| + |L\nabla_x\barphi(x,y)| + |L\nabla_y\barphi(x,y)|
	\le c(a),
\\
	\label{eq:barphi:cnvg}
	\sup_{(x,y)\in[0,a]^2}
	&| \barphi_L(x,y)-\Phi(x,y) | \longrightarrow 0,
	\quad
	\text{as } L\to\infty.
\end{align}
\end{lem}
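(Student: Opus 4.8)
The plan is to read \eqref{eq:phi:apribd} off elementary structural properties of the six-vertex height function, to obtain \eqref{eq:barphi:apribd} simply by taking expectations, and to deduce \eqref{eq:barphi:cnvg} from the law of large numbers of Theorem~\ref{thm:lln} together with the (deterministic, $L$-uniform) boundedness that \eqref{eq:phi:apribd} will provide. For \eqref{eq:phi:apribd} I would use that $H$ is $\Z_{\geq0}$-valued with $H(0,0)=0$ and that, since each edge of a six-vertex configuration carries at most one line, the forward gradients satisfy $\nabla_xH(x,y),\nabla_yH(x,y)\in\{0,1\}$ (only the bound $|\nabla_xH|,|\nabla_yH|\le1$ is actually needed below); summing along a monotone lattice path from the origin then gives $0\le H(x,y)\le x+y$. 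Hence on $([0,aL]\cap\Z)^2$ we have $\tfrac{|\beta_1-\beta_2|}{L}H(x,y)\le 2a|\beta_1-\beta_2|$, so \eqref{eq:def-phi} bounds $\phi(x,y)=\exp(-\tfrac{\beta_1-\beta_2}{L}H(x,y))$ between $c(a)^{-1}$ and $c(a)$. For the discrete gradients I would use the identity $\nabla_x\phi(x,y)=\phi(x,y)\,(q^{\nabla_xH(x,y)}-1)$ with $q=\exp(-\tfrac{\beta_1-\beta_2}{L})$: since $|\nabla_xH(x,y)|\le1$ one has $|q^{\nabla_xH(x,y)}-1|\le c/L$, hence $|L\nabla_x\phi(x,y)|\le c\,\phi(x,y)\le c(a)$, and likewise for $\nabla_y$. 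Then \eqref{eq:barphi:apribd} is immediate: $\barphi=\Ex[\phi]$ by definition and forward differences commute with $\Ex$, so $\nabla_x\barphi=\Ex[\nabla_x\phi]$ etc., and $|\Ex[\,\cdot\,]|\le\Ex[|\,\cdot\,|]$ applied to the three terms of \eqref{eq:phi:apribd} gives the claim.

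For \eqref{eq:barphi:cnvg} I would set $Z_L:=\sup_{(x,y)\in[0,a]^2}|\phi_L(x,y)-\Phi(x,y)|$, so that Theorem~\ref{thm:lln} (whose hypotheses are in force) gives $Z_L\to0$ in probability. Since the linear interpolant of Definition~\ref{defn:scale} at a point of $[0,a]^2$ is a convex combination of values of $\phi$ at lattice points of $([0,(a+1)L]\cap\Z)^2$, the bound \eqref{eq:phi:apribd} controls $\sup_{[0,a]^2}|\phi_L|$ by a constant $c(a)$ that is deterministic and uniform in $L$; combined with $\|\Phi\|_{L^\infty([0,a]^2)}<\infty$ (from continuity of $\Phi$, or directly from \eqref{eq:Riebdd}, \eqref{eq:Phi:mild} and the Lipschitz hypothesis on $\chi,\psi$), this gives $0\le Z_L\le c(a)$ almost surely, uniformly in $L$, so bounded convergence upgrades $Z_L\to0$ in probability to $\Ex[Z_L]\to0$. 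Because linear interpolation commutes with $\Ex$ we have $\barphi_L=\Ex[\phi_L]$, hence for every $(x,y)\in[0,a]^2$,
\begin{align*}
	|\barphi_L(x,y)-\Phi(x,y)| = \big|\Ex[\phi_L(x,y)-\Phi(x,y)]\big|
	\le \Ex\big[|\phi_L(x,y)-\Phi(x,y)|\big] \le \Ex[Z_L],
\end{align*}
and taking the supremum over $(x,y)$ on the left (the right side being independent of $(x,y)$) yields $\sup_{[0,a]^2}|\barphi_L-\Phi|\le\Ex[Z_L]\to0$, which is \eqref{eq:barphi:cnvg}.

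The steps are largely routine; the two points that need attention are the structural facts about $H$ — non-negativity and the unit bound on its discrete gradients, both immediate from the construction of the model — and the requirement that the a priori bound on $\phi_L$ be deterministic and uniform in $L$, which is exactly what \eqref{eq:phi:apribd} supplies and what legitimizes the use of bounded convergence. I would route \eqref{eq:barphi:cnvg} through Theorem~\ref{thm:lln} rather than comparing \eqref{eq:barphi} with \eqref{eq:Phi:mild} term by term, because the hypotheses control the boundary data only in sup-norm and not its derivatives, so a direct comparison would additionally require an Abel summation to transfer the discrete gradients off the boundary data.
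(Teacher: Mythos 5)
Your proof is correct. For \eqref{eq:phi:apribd} you argue exactly as the paper does: $H(0,0)=0$, $\nabla_xH,\nabla_yH\in\{0,1\}$ give $0\le H(x,y)\le x+y\le 2aL$, hence $\phi$ is bounded, and the identity $\nabla_x\phi=\phi\,(q^{\nabla_xH}-1)$ with $|q^{\nabla_xH}-1|\le c/L$ handles the gradients. For the other two parts you take a genuinely different route. The paper obtains both \eqref{eq:barphi:apribd} and \eqref{eq:barphi:cnvg} from the integrated representation \eqref{eq:barphi} together with the bounds and convergence of the discrete Riemann kernel in Lemma~\ref{lem:bound-R}; as you correctly anticipate, making that ``readily verified'' step fully explicit for \eqref{eq:barphi:cnvg} requires a summation by parts to move the discrete gradients off the boundary data (this is essentially the manipulation carried out later in the proof of Lemma~\ref{lem:sbp}), since the hypotheses control $\phi_L(\Cdot,0)$ and $\phi_L(0,\Cdot)$ only in sup norm. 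You instead get \eqref{eq:barphi:apribd} for free by taking expectations of the pathwise bound \eqref{eq:phi:apribd} (finite differences and linear interpolation commute with $\Ex$), and you get \eqref{eq:barphi:cnvg} by combining the imported law of large numbers, Theorem~\ref{thm:lln}, with the deterministic $L$-uniform bound on $\phi_L$ and bounded convergence, so that convergence in probability of $\sup|\phi_L-\Phi|$ upgrades to convergence of its mean and hence to $\sup|\Ex[\phi_L]-\Phi|\to0$. This is shorter and avoids the summation by parts entirely, at the cost of leaning on Theorem~\ref{thm:lln}; that is harmless here, since Theorem~\ref{thm:lln} is an external input to this paper and is nowhere proved via the present lemma, so no circularity arises. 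Both arguments are valid.
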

\begin{proof}
Given Lemma~\ref{lem:bound-R}, \eqref{eq:barphi:apribd}--\eqref{eq:barphi:cnvg} are readily verified from~\eqref{eq:barphi}.
As for~\eqref{eq:phi:apribd}, recall that $ \q:=e^{\beta_1-\beta_2} $ is fixed.
Indeed, since $ H(0,0):=0 $ by definition,
 and $ \nabla_xH(x,y)\in\{0,1\} $ and $ \nabla_yH(x,y)\in\{0,1\} $, we have
$	\phi(x,y) := \q^{\frac{1}{L}H(x,y)} \leq \q^{\frac{2La}{L}} =c(a),$
and
$	|L\nabla_\beta\phi(x,y)| = |L(\q^{\frac{1}{L}\nabla_aH(x,y)}-1)\phi(x,y)| \leq c(a),$
for	
$ \beta=x,y $.
%
%
%
\end{proof}

\begin{lem}  \label{lem:cond-mom-xi}
For any $ k\in\Z_{> 0} $, 
\begin{align}	
	\label{eq:dnoise:mom}
 	\Ex\big[ \, |\dnoise(x+1,y+1)|^k \, & \big| \, H(x',y'), x'\le x \text{ or } y'\le y \big] \le c(k) L^{-k-1},
\\
	\label{eq:dnoise:uniform}
	&|\dnoise(x+1,y+1)| \leq cL^{-1},
\end{align}
for all $(x,y)\in \Z_{\geq 0}^2 $, $ L\geq 1 $.
\end{lem}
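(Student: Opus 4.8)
\textbf{Proof plan for Lemma~\ref{lem:cond-mom-xi}.}
The key observation is that $\dnoise(x+1,y+1)$ is a bounded random variable with a very small conditional range, which simultaneously gives the pointwise bound~\eqref{eq:dnoise:uniform} and, once we control its conditional second moment, all the higher moments in~\eqref{eq:dnoise:mom}. First I would establish~\eqref{eq:dnoise:uniform}. From the integrated representation~\eqref{eq:phi} applied at $(x+1,y+1)$ and at the three neighbors $(x,y+1)$, $(x+1,y)$, $(x,y)$, and using that $\rieI(0,0)=\rie(0,0)$ while $\rieI$ vanishes when either argument is negative, one solves for $\dnoise(x+1,y+1)$ as an explicit finite linear combination of the values $\phi(u,v)$ at $(u,v)$ in a bounded neighborhood of $(x+1,y+1)$ together with boundary contributions. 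The cleanest route, though, is to go back to the definition of $\dnoise$ from \cite[Theorem~3.1]{borodin18}: $\dnoise(x+1,y+1)$ is (a multiple of) $\phi(x+1,y+1)$ minus its conditional expectation given the configuration with $u\le x$ or $v\le y$. Since $H(x+1,y+1)-H(x,y)\in\{0,1,2\}$ (each of the two discrete gradients lies in $\{0,1\}$), the quantity $\phi(x+1,y+1)=\q^{\frac1L H(x+1,y+1)}$ ranges over at most three values differing from $\phi(x,y)$ by a factor $\q^{O(1/L)}=1+O(1/L)$; hence $\phi(x+1,y+1)$ minus its conditional mean is $O(L^{-1})$ times $\phi(x,y)$, and by~\eqref{eq:phi:apribd} the latter is $O(1)$ on $([0,aL]\cap\Z)^2$. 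Tracking the normalization constants in \cite[Theorem~3.1]{borodin18} (they involve factors like $b_2(1-b_1)+b_1(1-b_2)$, which are $O(1)$), this yields $|\dnoise(x+1,y+1)|\le cL^{-1}$.

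Next I would obtain~\eqref{eq:dnoise:mom}. Write $\filtnoi_{x,y}:=\sigma(H(u,v): u\le x \text{ or } v\le y)$. For $k=1$, \eqref{eq:condE-xi} already gives the conditional mean is $0$, which is far stronger than the claimed bound (and in fact one uses it to get the $L^{-k-1}$ rather than $L^{-k}$ rate). For $k=2$, the conditional variance formula~\eqref{eq:condVar-xi} expresses $\Ex[\dnoise(x+1,y+1)^2\mid\filtnoi_{x,y}]$ as a sum of three terms, each a product of an $O(1)$ coefficient (note $b_i(1-b_j)=O(1)$ and $1-q=1-b_1/b_2=O(L^{-1})$) with either $\nabla_x\phi\,\nabla_y\phi$ or $\phi\,\nabla_x\phi$ or $\phi\,\nabla_y\phi$. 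By~\eqref{eq:phi:apribd}, $|\nabla_x\phi|,|\nabla_y\phi|\le cL^{-1}$ and $|\phi|\le c$ on the relevant range; hence the first term is $O(L^{-2})$ and, crucially, the last two terms carry an extra factor $1-q=O(L^{-1})$ and are therefore $O(L^{-2})$ as well. So $\Ex[\dnoise(x+1,y+1)^2\mid\filtnoi_{x,y}]\le c L^{-2}$... but the claim is $cL^{-3}$. The resolution is that $1-q=1-b_1/b_2 = 1-e^{(\beta_2-\beta_1)/L}=O(L^{-1})$ handles two of the three terms, and for the first term one uses that $\nabla_x\phi\cdot\nabla_y\phi$ is actually a \emph{product} of two $O(L^{-1})$ quantities whose coefficient $b_2(1-b_1)+b_1(1-b_2)$ is $O(1)$ — giving only $L^{-2}$. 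To upgrade to $L^{-3}$ I would instead use~\eqref{eq:dnoise:uniform}: combining the pointwise bound $|\dnoise|\le cL^{-1}$ with the conditional second moment bound gives $\Ex[|\dnoise|^k\mid\filtnoi_{x,y}] = \Ex[|\dnoise|^{k-2}|\dnoise|^2\mid\filtnoi_{x,y}]\le (cL^{-1})^{k-2}\cdot cL^{-2}$, which would be $cL^{-k}$. Attaining the stated $L^{-k-1}$ therefore requires the conditional \emph{second} moment itself to be $O(L^{-3})$; this must come from a cancellation in~\eqref{eq:condVar-xi} after accounting for the boundary-compatibility $\chi(0)=\psi(0)$ and the structure of the discrete gradients, or equivalently from the fact that $\nabla_x\phi$ and $\nabla_y\phi$ cannot both be of order $L^{-1}$ generically — one of them, conditionally, has mean of smaller order. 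I would extract this by writing $\nabla_x\phi(x,y)=(\q^{\frac1L\nabla_xH(x,y)}-1)\phi(x,y)$ with $\nabla_xH\in\{0,1\}$, so $\nabla_x\phi = \frac1L(\beta_1-\beta_2)\mathbf 1_{\{\nabla_xH=1\}}\phi + O(L^{-2})$, and similarly for $\nabla_y$; substituting into~\eqref{eq:condVar-xi} and using $1-q=-\frac1L(\beta_1-\beta_2)+O(L^{-2})$ one sees all three $O(L^{-2})$ contributions share the common factor $\frac1{L^2}(\beta_1-\beta_2)^2\phi^2$ times $\mathbf 1_{\{\nabla_xH=1\}}\mathbf1_{\{\nabla_yH=1\}} + \mathbf1_{\{\nabla_yH=1\}} - \mathbf1_{\{\nabla_xH=1\}}$ — wait, that does not obviously vanish, so instead the correct accounting must be that $\nabla_y H$ and $\nabla_x H$ at the same site are constrained (an outgoing line on the right forces, or forbids, an outgoing line on the top depending on the incoming configuration), forcing the indicator combination to be identically $0$ or producing the needed extra $L^{-1}$.

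The main obstacle, then, is precisely this last point: pinning down why the conditional second moment of $\dnoise(x+1,y+1)$ is $O(L^{-3})$ rather than merely $O(L^{-2})$. I expect this to follow from a careful bookkeeping of the six-vertex weights in the scaling~\eqref{eq:scaling}: writing $\V(x,y):=\Ex[\dnoise(x+1,y+1)^2\mid\filtnoi_{x,y}]$ explicitly via~\eqref{eq:condVar-xi}, substituting the exact forms $b_1=e^{-\beta_1/L}$, $b_2=e^{-\beta_2/L}$, $1-q=1-e^{(\beta_2-\beta_1)/L}$, and the exact relations $\nabla_xH,\nabla_yH\in\{0,1\}$ with their joint law determined by the vertex at $(x+1,y+1)$ conditioned on its two incoming edges, one should find that the $O(L^{-2})$ terms cancel identically, leaving $V(x,y)=O(L^{-3})$ uniformly on $([0,aL]\cap\Z)^2$ by~\eqref{eq:phi:apribd}. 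Granting this, \eqref{eq:dnoise:mom} for general $k\ge 2$ follows immediately from $\Ex[|\dnoise|^k\mid\filtnoi_{x,y}]\le (cL^{-1})^{k-2}\,V(x,y)\le c(k)L^{-k-1}$ via~\eqref{eq:dnoise:uniform}, and the $k=1$ case is trivial from $|\dnoise|\le cL^{-1}$ (or, sharper, from~\eqref{eq:condE-xi} combined with the $k=2$ bound and Cauchy--Schwarz, giving even $L^{-3/2}$, though $L^{-2}$ suffices for~\eqref{eq:dnoise:mom}). I would therefore organize the write-up as: (i) prove~\eqref{eq:dnoise:uniform} from the bounded range of $H(x+1,y+1)-H(x,y)$ and~\eqref{eq:phi:apribd}; (ii) prove $V(x,y)\le cL^{-3}$ by the weight bookkeeping above; (iii) deduce~\eqref{eq:dnoise:mom} by interpolating between (i) and (ii).
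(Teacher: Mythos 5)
There is a genuine gap, and it sits exactly where you flagged your ``main obstacle.'' The difficulty you perceive in getting the conditional second moment down to $O(L^{-3})$ is an artifact of a bookkeeping error: under the scaling \eqref{eq:scaling} one has $1-b_i=1-e^{-\beta_i/L}=O(L^{-1})$, so $b_i(1-b_j)=O(L^{-1})$, \emph{not} $O(1)$ as you assert. With the correct orders, \eqref{eq:condVar-xi} gives the bound directly and with no cancellation: the first coefficient $b_2(1-b_1)+b_1(1-b_2)$ is $O(L^{-1})$ and multiplies $\nabla_x\phi\,\nabla_y\phi=O(L^{-2})$, while the other two coefficients $b_1(1-b_{1,2})(1-q)$ are $O(L^{-2})$ and multiply $\phi\,\nabla_\alpha\phi=O(L^{-1})$; every term is $O(L^{-3})$. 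So the step you leave as ``I expect this to follow from a careful bookkeeping \ldots one should find that the $O(L^{-2})$ terms cancel identically'' is both unproven in your write-up and premised on a nonexistent cancellation; as it stands the proposal does not establish \eqref{eq:dnoise:mom} even for $k=2$.

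Even granting the second-moment bound, your interpolation $\Ex[|\dnoise|^k\mid\cdot]\le \|\dnoise\|_\infty^{k-2}\,\Ex[|\dnoise|^2\mid\cdot]$ only covers $k\ge2$. For $k=1$ the claim is $\Ex[|\dnoise|\mid\cdot]\le cL^{-2}$, and the best your inputs yield is $\min(cL^{-1},cL^{-3/2})=cL^{-3/2}$ by Cauchy--Schwarz, which is \emph{weaker} than $L^{-2}$ (you appear to read $L^{-3/2}$ as stronger). This is not a cosmetic loss: the $k=1$ rate $L^{-2}$ is used later in the proof of \eqref{eq:bdy2decay} (cases with exponents $k=2,1,1$ and $k=3,1$). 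The paper avoids both problems by arguing differently: it conditions on the four possible incoming-line configurations at $(x+1,y+1)$ and uses the explicit values of $\dnoise$ from the four-point relation of \cite[Theorem~3.1]{borodin18}. The essential structure is that $\dnoise$ is either $0$, or of size $O(L^{-2})$ on an event of probability $O(1)$, or of size $O(L^{-1})$ on an event of probability $1-b_i=O(L^{-1})$; hence $\Ex[|\dnoise|^k\mid\cdot]\le c\,(L^{-2k}+L^{-1}\cdot L^{-k})\le c(k)L^{-k-1}$ uniformly for all $k\ge1$, and the uniform bound \eqref{eq:dnoise:uniform} is read off the same case list. To repair your proof you would need to replace the variance-plus-interpolation scheme by this (or an equivalent) ``rare large value'' decomposition.
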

\begin{proof}
First, conditioning $ H(x',y'), x'\le x \text{ or } y'\le y $ for $ \dnoise(x+1,y+1) $
amounts to conditioning on incoming line configuration into the site $ (x+1,y+1) $.
There are four cases pertaining to such conditions, 
and in each case  $\dnoise(x+1,y+1) $ is computed in \cite[Proof of Theorem~3.1]{borodin18},
using the `four point relation' derived therein.
We record the results of their computation here, and examine 
the asymptotics in $L$ of the values of $\dnoise$ and their probabilities in each case.
In the following vertices of type $ I $--$ VI $ refers to those depicted in Figure~\ref{tbl:S6VWeights}.

\begin{enumerate}
\item
No line enters into the vertex $(x+1,y+1)$ from below or from the left:
In this case the vertex is of type $I$, whereby
$
	\dnoise(x+1,y+1)=0.
$
\item
Two lines enter into the vertex $(x+1,y+1)$, one from below and one from the left:
In this case the vertex is of type $II$, whereby
$
	\dnoise(x+1,y+1)=0.
$
\item
One line enters into the vertex $(x+1,y+1)$ from below, but no line enters from the left: 
In this case the vertex is of type $IV$ with probability $b_2$ and
 \[
 |\dnoise(x+1,y+1)|= |q^h (q^{-1}-b_1)(1-q)| \le c L^{-2};
 \]
or of type $VI$ with probability $1-b_2 \le c L^{-1}$ and
 \[
 |\dnoise(x+1,y+1)|= |q^h b_1 (q-1)| \le c L^{-1} .
   \]
\item
One line enters into the vertex $(x+1,y+1)$ from the left, but no line enters from below: 
In this case the vertex is of type $III$ with probability $b_1$ and
\[
|\dnoise(x+1,y+1)|=|q^h(1-b_1)(q-1)|\le cL^{-2};
\]
or of type $V$ with probability $1-b_1 \le c L^{-1}$ and
\[
| \dnoise(x+1,y+1)|=|q^h b_1(1-q)| \le cL^{-1}.
\]
\end{enumerate}
The conditional moments bound \eqref{eq:dnoise:mom}
and the uniform bound \eqref{eq:dnoise:uniform} readily follow from the preceding discussion.
\end{proof}

\section{Proof of Theorem~\ref{thm:main} and Corollary~\ref{cor:main}}
\label{sec:pfmain}
\subsection{Proof of Theorem~\ref{thm:main}}
Write $ \Rightarrow $ for convergence in distribution.
Hereafter throughout the article, we fix $ (\x_1,\y_1),\ldots, (\x_n,\y_n) \in \Rp^2 $.
Our goal is to prove 
$
	(\sqrt{L}u(L\x_i,L\y_i))_{i=1}^n \Rightarrow (U(\x_i,\y_i))_{i=1}^n.
$
To simplify notation, we work under the consent that whenever the arguments of $ u $ are not integers,
they are being taken integer parts, e.g., $ u(L\x_i,L\y_i):= u(\lfloor L\x_i\rfloor,\lfloor L\y_i\rfloor) $.
Similar convention is adopted without explicitly stated for processes over integers.

Given the expression~\eqref{eq:u} of $ u $,
we proceed via martingale \ac{CLT}, (as in \cite{borodin18} for the low density regime).
To this end we linearly order points on $ \Z^2_{>0} $ as
\begin{align}
	\label{eq:linord}
	(x(1),y(1)):=(1,1),\quad
	(x(2),y(2)):=(2,1),\quad
	(x(3),y(3)):=(1,2),\quad
	(x(4),y(4)):=(3,1),\quad \cdots
\end{align}
%
%
Consider the discrete time process $ M(t)\in\R^d $, $ t=1,2,\ldots $,
\begin{align}
	\label{eq:M}
	M(t):=(M_i(t))_{i=1}^n,
	\quad
	M_i (t)
	:= 
	\sum_{s=1}^t \sqrt{L} \rieI(L \x_i - x(s),L\y_i - y(s)) \, \dnoise(x(s),y(s)).
\end{align}
It follows from~\eqref{eq:condE-xi} that $ M(t) $ is a martingale.
Recall that, by definition, $ \rieI(x,y) $ carries indicator functions forcing $ x,y\geq 0 $.
Hence, for some large enough $ c_*<\infty $,
\begin{align}
	\label{eq:c*}
	M(c_*L^2)=M(c_*L^2+1)=\ldots=M(\infty) 
	= (\sqrt{L} u(L\x_i,L \y_i))_{i=1}^n.
\end{align}
Let $\filtM(t) := \sigma(M(1),\ldots,M(t)) $ denote the canonical filtration of $ M(t) $,
and recall that cross variance of $ M $ is defined as
\begin{align}
	\label{eq:qv:}
	\langle M_i,M_j\rangle (t)
	&:= \sum_{s=1}^t \Ex \big[ (M_i(s)-M_i(s-1))(M_j(s)-M_j(s-1)) \big| \filtM(s-1)\big].
\end{align}
Put $ \RieI(x,y) := \Rie(x,y)\ind_\set{x\geq 0}\ind_\set{y\geq 0} $,
and recall the definition of $ \D(x,y) $ from~\eqref{eq:D}. We set
\begin{align}
	\label{eq:QV}
	\QV_{ij}
	:=
	\int_{\Rp^2} \RieI_{ij}(x,y)\, D(x,y) dxdy,
	\quad
	\RieI_{ij}(x,y) := \RieI(\x_i-x,\y_i-y)\RieI(\x_j-x,\y_j-y).
\end{align}

The martingale \ac{CLT} from \cite{hall2014} applied to $ M(t) $ gives
\begin{thm}[{\cite[Corollary~3.1]{hall2014}}]
\label{thm:mgCLT}
If, for any $ i,j=1,\ldots,n $ and $ \e>0 $,
\begin{align}
	\tag{\text{Lind}}
	\label{eq:lindCnd}
	& 
	\sum_{s=1}^{L^2c_*} \Ex\big[ (M_i(s)-M_i(s-1))^2 \ind_\set{|M_i(s)-M_i(s-1)|>\e} \big] 
	\longrightarrow 0,
\\
	\tag{\text{QV}}
	\label{eq:qvCnd}
	&\langle M_i,M_j\rangle (L^2c_*) \Longrightarrow_\text{P}
	\QV_{ij},
\end{align}
then
\begin{align*}
	M(c_*L^2) \Longrightarrow (U(\x_i,\y_i))_{i=1}^n.
\end{align*}
\end{thm}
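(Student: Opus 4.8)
\emph{Proof plan.} The plan is to deduce Theorem~\ref{thm:mgCLT} from the scalar martingale central limit theorem of Hall--Heyde together with the Cram\'er--Wold device, so that the only genuinely new ingredients are the bookkeeping relating the hypotheses~\eqref{eq:lindCnd}--\eqref{eq:qvCnd} to the limiting vector $ (U(\x_i,\y_i))_{i=1}^n $. Fix $ \lambda=(\lambda_1,\ldots,\lambda_n)\in\R^n $ and set $ M^\lambda(t):=\sum_{i=1}^n\lambda_i M_i(t) $. Since each $ M_i $ is an $ \filtM(t) $-martingale by~\eqref{eq:condE-xi}, so is $ M^\lambda $, and for fixed $ L $ its increments
\[
	M^\lambda(s)-M^\lambda(s-1)=\sqrt L\Big(\textstyle\sum_{i=1}^n\lambda_i\rieI(L\x_i-x(s),L\y_i-y(s))\Big)\dnoise(x(s),y(s))
\]
are bounded, so $ \{M^\lambda(s)-M^\lambda(s-1):1\le s\le c_*L^2\}_{L\ge 1} $ is a square-integrable martingale-difference array. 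The form of the martingale CLT to invoke is the one that does \emph{not} require nested $ \sigma $-fields — permissible precisely because the limiting variance below is deterministic — and it asks for (i) a Lindeberg condition and (ii) convergence of the sum of conditional second moments to a constant.

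For (ii): the cross-variation~\eqref{eq:qv:} is bilinear in its two arguments (the product of increments is), so
\[
	\langle M^\lambda,M^\lambda\rangle(c_*L^2)=\sum_{i,j=1}^n\lambda_i\lambda_j\langle M_i,M_j\rangle(c_*L^2)\ \longrightarrow_{\text{P}}\ \sum_{i,j=1}^n\lambda_i\lambda_j\QV_{ij}=:\sigma_\lambda^2,
\]
using~\eqref{eq:qvCnd} for each pair and stability of convergence in probability under finite linear combinations. For (i): by Lemmas~\ref{lem:bound-R} and~\ref{lem:cond-mom-xi}, the indicators inside $ \rieI $ confine the summands to $ (x(s),y(s))\in([0,aL]\cap\Z)^2 $ with $ a:=\max_i(\x_i\vee\y_i) $, on which $ |\rieI|\le c(a) $ and $ |\dnoise|\le cL^{-1} $; hence $ |M^\lambda(s)-M^\lambda(s-1)|\le c(a,\lambda)L^{-1/2} $ uniformly in $ s $, which forces $ \max_s|M^\lambda(s)-M^\lambda(s-1)|\to 0 $ and makes the Lindeberg sum vanish outright for $ L $ large (alternatively it follows from~\eqref{eq:lindCnd} via a union bound). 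The cited corollary thus yields $ M^\lambda(c_*L^2)=\sum_i\lambda_iM_i(c_*L^2)\Rightarrow N(0,\sigma_\lambda^2) $, and since $ \lambda $ was arbitrary the Cram\'er--Wold theorem gives $ M(c_*L^2)\Rightarrow W $, where $ W $ is the centered Gaussian vector with covariance matrix $ \QV=(\QV_{ij})_{i,j=1}^n $.

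It remains to identify $ W $ with $ (U(\x_i,\y_i))_{i=1}^n $. The latter is a centered Gaussian vector, being a finite sample of the Gaussian field $ U $, so it suffices to match second moments. In $ \QV_{ij} $ from~\eqref{eq:QV}, the indicator factors carried by $ \RieI(\x_i-x,\y_i-y) $ and $ \RieI(\x_j-x,\y_j-y) $, together with the domain of integration $ \Rp^2 $, restrict $ x\in[0,\x_i\wedge\x_j] $ and $ y\in[0,\y_i\wedge\y_j] $, on which $ \RieI=\Rie $; hence $ \QV_{ij}=\int_0^{\x_i\wedge\x_j}\!\int_0^{\y_i\wedge\y_j}\Rie(\x_i-\bar x,\y_i-\bar y)\Rie(\x_j-\bar x,\y_j-\bar y)\D(\bar x,\bar y)\,d\bar x\,d\bar y $, which is exactly $ \Ex[U(\x_i,\y_i)U(\x_j,\y_j)] $ by the defining covariance of $ U $. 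Therefore $ W\stackrel{\text{law}}{=}(U(\x_i,\y_i))_{i=1}^n $, completing the proof.

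\emph{On the difficulty.} Inside the proof of Theorem~\ref{thm:mgCLT} itself there is no serious obstacle: the argument is bookkeeping layered on a standard theorem, and the only points needing care are (a) invoking the version of the Hall--Heyde CLT valid without nested $ \sigma $-fields, which is legitimate here because $ \QV $ is a deterministic matrix, and (b) the covariance identification above. The genuine analytic work lies in verifying the two hypotheses in the later sections: \eqref{eq:lindCnd} will be immediate from the uniform bound $ |\dnoise|\le cL^{-1} $ of Lemma~\ref{lem:cond-mom-xi}, whereas \eqref{eq:qvCnd} is the hard part, since it requires controlling the quadratic-gradient contribution to $ \D $ through the discrete integrated equation~\eqref{eq:phi} and a contraction of the discrete noise $ \dnoise $; that is where I would expect the main effort to go.
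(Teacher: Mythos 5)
Your proposal is correct and matches the paper's treatment: the paper simply cites Hall--Heyde's Corollary~3.1 for the scalar case and remarks that the $\R^n$-valued version follows by projecting $M(t)$ onto fixed vectors $v\in\R^n$ (Cram\'er--Wold), which is exactly the route you take, with the covariance identification $\QV_{ij}=\Ex[U(\x_i,\y_i)U(\x_j,\y_j)]$ and the applicability of the non-nested-$\sigma$-field version (since $\QV$ is deterministic) both handled correctly.
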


\begin{rmk}
Note that, even though~\cite[Corollary~3.1]{hall2014} is stated for $ \R $-valued martingale,
generalization to $ \R^n $-value is standard, by projection $ M(t)\in\R^n $ onto arbitrarily fixed $ v\in\R^n $.
\end{rmk}

Given Theorem~\ref{thm:mgCLT}, it suffices to check the conditions~\eqref{eq:lindCnd}--\eqref{eq:qvCnd}.
The former follows at once from the fact that $ |\dnoise(x,y)| \leq cL^{-1} $ (from Lemma~\ref{lem:cond-mom-xi}),
which makes the indicator functions in~\eqref{eq:lindCnd} zero for all large enough $ L $. 
We hence devote the rest of the article to proving~\eqref{eq:qvCnd}. 
From \eqref{eq:condVar-xi} we calculate the cross variance (defined in~\eqref{eq:qv:}) as
\begin{align}
	\langle M_i,M_j\rangle (c_*L^2)
	&= L^{-2}\sum_{s=1}^{c_*L^2}
	 \rieI_{ij}(x(s),y(s)) \,  \Dd(x(s),y(s);\phi),
 	\label{eq:qv}
\end{align}
where
\begin{align}
	\label{eq:rieIij}
	\rieI_{ij}(x,y)
	&:=
	\rieI(L \x_i - x,L\y_i - y)  \rieI(L \x_j - x,L\y_j - y) 
\\
	\label{eq:Dd}
	\Dd(x,y;\phi)  
	&:=
	\gamma_{xy} \cdot
	L \nabla_x \phi (x,y) \cdot L \nabla_y \phi(x,y)
	+\gamma_{x}\cdot \phi(x,y) \cdot L\nabla_x \phi(x,y)
	+ \gamma_{y}\cdot \phi(x,y) \cdot L\nabla_y \phi(x,y),
\\
	\label{eq:gamma}
	\gamma_{xy}
	&:= L(b_2(1-b_1)+b_1(1-b_2)),
	\
	\gamma_{x}:=
 	L^2(b_1(1-b_2)(1-q)),
 	\
 	\gamma_{y}:=
   - L^2(b_1(1-b_1)(1-q)).
\end{align}
Recall that $ \barphi(x,y) := \Ex[\phi(x,y)] $.
Compare~\eqref{eq:QV} and \eqref{eq:qv}--\eqref{eq:Dd}.
The main step toward proving~\eqref{eq:qvCnd} is to show that, in~\eqref{eq:Dd},
we can approximate $ \phi $, $ L\nabla_x\phi $, and $ L\nabla_y\phi $ 
by their continuum counterparts $ \Phi $, $ \partial_x\Phi $, and $ \partial_y\Phi $,
in a suitable sense under the limit $ L\to\infty $.
With this in mind, we decompose $ \langle M_i, M_j \rangle(L^2c_*)-\QV_{ij} = S_1+S_2 $, where
\begin{align}
	\label{eq:S1}
	S_1 
	&:=
	L^{-2}\sum_{s=1}^{c_*L^2}
	\rieI_{ij}(x(s),y(s)) \,  \Dd(x(s),y(s);\barphi)
	-
	\QV_{ij},
\\
	\label{eq:S2}
	S_2
	&:=
	L^{-2}\sum_{s=1}^{c_*L^2}
	\rieI_{ij}(x(s),y(s)) \, \big( \Dd(x(s),y(s);\phi) - \Dd(x(s),y(s);\barphi) \big).
\end{align}
Here, $ S_2 $ records the difference of replacing $ \phi $, $ L\nabla_x\phi $, and $ L\nabla_y\phi $
with their respective expectations $ \barphi $, $ L\nabla_x\barphi $, and $ L\nabla_y\barphi $;
while $ S_1 $ accounts for the difference between $ \barphi $, $ L\nabla_x\barphi $, and $ L\nabla_y\barphi $
with their corresponding terms in continuum $ \Phi $, $ \partial_x\Phi $, and $ \partial_y\Phi $.
In particular, note that $ S_1 $ is deterministic.
We will show separately that $ S_1\to 0 $ and $ S_2\to_\text{P} 0 $:
\begin{prop}
\label{prop:qv}
For fixed $ i,j\in\{1,\ldots,n\} $, with $ S_1 $ and $ S_2 $ defined in~\eqref{eq:S1}--\eqref{eq:S2},
we have, as $ L\to\infty $,\\
\begin{enumerate*}[label=(\alph*)]
\item \label{prop:qv:S1} $ S_1\to 0 $; 
and \item \label{prop:qv:S2} $ S_2\to_\text{P} 0 $.
\end{enumerate*}
\end{prop}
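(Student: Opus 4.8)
Here is a proof strategy for Proposition~\ref{prop:qv}.

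The two parts call for different methods. Part~\ref{prop:qv:S1} is deterministic: $S_1$ is a Riemann sum minus the limiting integral $\QV_{ij}$, and the plan is to verify convergence of each ingredient. Since $\rieI_{ij}$ in \eqref{eq:rieIij} carries indicators confining $(x,y)$ to a fixed box of $O(L^2)$ lattice sites, the sum in \eqref{eq:S1} is over that box. By \eqref{eq:scaling} and a Taylor expansion of $b_1=e^{-\beta_1/L}$, $b_2=e^{-\beta_2/L}$, the weights $\gamma_{xy},\gamma_x,\gamma_y$ of \eqref{eq:gamma} converge to the coefficients of $\D$ in \eqref{eq:D}; Lemma~\ref{lem:bound-R} gives $\rieI(L\x_i-x,L\y_i-y)\to\RieI(\x_i-\tfrac{x}{L},\y_i-\tfrac{y}{L})$ uniformly; and differentiating \eqref{eq:barphi}, then invoking Lemma~\ref{lem:bound-R} at $|k|=1$, Lemma~\ref{lem:apribd}, and summation by parts on the boundary sums (the discrete counterpart of \eqref{eq:Phi:mild}), one upgrades \eqref{eq:barphi:cnvg} to $\barphi_L\to\Phi$ together with convergence of the rescaled $L\nabla_x\barphi$, $L\nabla_y\barphi$ to $\partial_x\Phi$, $\partial_y\Phi$, uniformly on compacts. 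Hence the rescaled integrand $\rieI_{ij}\,L^{-2}\Dd(\cdot;\barphi)$ converges to $\RieI_{ij}\D$ away from the codimension-one set on which the indicators in $\RieI_{ij}$ jump, where the $O(L)$ nearby sites contribute only $O(L^{-1})$; a standard Riemann-sum argument then gives $S_1\to0$.

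For Part~\ref{prop:qv:S2}, I would substitute $\phi=\barphi+u$ into \eqref{eq:Dd} and sort $S_2$ by its degree in the fluctuation $u$. The degree-one part of $S_2$ has mean zero; the degree-two part is
\begin{align*}
	S_2^{\mathrm{quad}} = L^{-2}\sum_{s}\rieI_{ij}(x(s),y(s))\Big(\gamma_{xy}\,L\nabla_x u\cdot L\nabla_y u + \gamma_x\, u\cdot L\nabla_x u + \gamma_y\, u\cdot L\nabla_y u\Big)(x(s),y(s)).
\end{align*}
From \eqref{eq:u}, $u(x,y)=\sum_{x',y'}\rieI(x-x',y-y')\dnoise(x',y')$, and the discrete gradients $L\nabla_x u$, $L\nabla_y u$ have analogous noise expansions, each carrying an extra \emph{boundary-layer} term supported on a single line of noise sites ($\{x'=x+1\}$, respectively $\{y'=y+1\}$) on which the coefficient has size $O(L)$ and decays geometrically. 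Using pairwise orthogonality of the $\dnoise(x',y')$ (a consequence of \eqref{eq:condE-xi} and the linear order \eqref{eq:linord}) together with $\Ex[\dnoise^2]\le cL^{-3}$ (Lemma~\ref{lem:cond-mom-xi}), one finds $\Ex[u^2]\le cL^{-1}$ but only $\Ex[(L\nabla_x u)^2],\Ex[(L\nabla_y u)^2]=O(1)$, the boundary layer being what obstructs a smaller bound. The degree-one part of $S_2$, and the $u\,L\nabla_x u$ and $u\,L\nabla_y u$ pieces of $S_2^{\mathrm{quad}}$, are then disposed of by routine estimates: Cauchy--Schwarz gives $\Ex[|u\,L\nabla_x u|]\le cL^{-1/2}$, hence an $L^1$ bound of order $L^{-1/2}$; and the degree-one part, having mean zero, has variance $O(L^{-1})$ (here one uses that boundary-layer terms at distinct lattice sites overlap only on thin sets of noise sites, with geometrically decaying coefficients).

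The remaining piece $S_2^{xy}:=L^{-2}\sum_s\rieI_{ij}(x(s),y(s))\,\gamma_{xy}\,L\nabla_x u\cdot L\nabla_y u$ is the crux, and an $L^1$ bound is inadequate because $L\nabla_x u\cdot L\nabla_y u$ is $O(1)$ in $L^1$; the plan is a second-moment estimate. For the mean, expanding the two gradients and pairing noises leaves only the diagonal, and since the boundary layer of $L\nabla_x u$ lives on $\{x'=\cdot+1\}$ while that of $L\nabla_y u$ lives on $\{y'=\cdot+1\}$, these (and each against the other's bulk) do not overlap, so only the ``bulk $\times$ bulk'' pairing survives; this gives $\Ex[L\nabla_x u\cdot L\nabla_y u]=O(L^{-1})$, hence $\Ex[S_2^{xy}]=O(L^{-1})$. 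For the variance, $\Ex[(S_2^{xy})^2]$ is a fourfold sum over noise sites; by \eqref{eq:condE-xi} a product of four $\dnoise$'s has zero mean unless the four sites pair up, and the surviving pairings are controlled by the conditional moment bounds $\Ex[\,|\dnoise|^k\mid\cdot\,]\le cL^{-k-1}$ of Lemma~\ref{lem:cond-mom-xi}. Splitting each of the four gradient factors into bulk and boundary-layer parts, one checks that every boundary-enhanced pairing is accompanied by a codimension-one constraint relating $(x(s),y(s))$ and $(x(s'),y(s'))$ (equality of one coordinate), which exactly offsets the $O(L)$ boundary coefficients; the net bound is $\mathrm{Var}(S_2^{xy})=O(L^{-2})$. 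Thus $S_2^{xy}\to0$ in $L^2$, so $\to_\text{P}0$, and with the earlier bounds $S_2\to_\text{P}0$.

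The main obstacle is this fourth-moment/contraction computation for $\mathrm{Var}(S_2^{xy})$: one must enumerate the bulk versus boundary-layer combinations of the four gradient factors, apply the pairing structure forced by \eqref{eq:condE-xi}, and confirm in each case that the geometric constraints on the four noise sites are tight enough for the contribution to be $o(1)$ after the $L^{-4}$ normalization. Everything else — the Riemann-sum convergence behind Part~\ref{prop:qv:S1}, the mean-zero and Cauchy--Schwarz treatment of the low-degree terms, and the identification $\Ex[L\nabla_x u\cdot L\nabla_y u]=O(L^{-1})$ — is routine given Lemmas~\ref{lem:bound-R}--\ref{lem:cond-mom-xi}.
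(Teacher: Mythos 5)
Your treatment of part~\ref{prop:qv:S2} is essentially the paper's argument: decompose the fluctuation of each discrete gradient into a bulk piece and a boundary-layer piece supported on a single line of noise sites, kill everything containing a bulk factor by Cauchy--Schwarz and the $O(L^{-1/2})$ bound on its $L^2$ norm, and handle the purely boundary-layer terms by a second-moment computation in which the four-point correlations of $\dnoise$ are reduced to pairings/contractions via \eqref{eq:condE-xi} and Lemma~\ref{lem:cond-mom-xi}; your observation that the boundary layers of $L\nabla_x u$ and $L\nabla_y u$ live on disjoint sets of noise sites, and that each contraction costs a codimension-one constraint offsetting the $O(L)$ boundary coefficients, is exactly the mechanism used in Section~5. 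That part of the proposal is sound as a strategy.

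Part~\ref{prop:qv:S1}, however, rests on a step that fails: you claim that differentiating \eqref{eq:barphi} and summing by parts on the boundary sums upgrades \eqref{eq:barphi:cnvg} to \emph{uniform} convergence of $L\nabla_x\barphi_L$, $L\nabla_y\barphi_L$ to $\partial_x\Phi$, $\partial_y\Phi$. This is false under the standing assumptions, and the paper explicitly flags it at the start of Section~4. The representation \eqref{eq:barphix} shows $L\nabla_x\barphi(x,y)=\rie(0,y)\,L\nabla_x\barphi(x,0)+\cdots$, and $L\nabla_x\barphi(x,0)$ is the discrete gradient of the \emph{boundary data}, which is only assumed to converge uniformly to a Lipschitz $\chi$; its discrete gradient need not converge at all (e.g.\ $H(x,0)=\lfloor x/2\rfloor$ makes $L\nabla_x\phi(x,0)$ oscillate between $0$ and $\log\q\cdot\phi$ while $\phi_L(\cdot,0)$ converges to a smooth limit). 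Consequently the integrand $\rieI_{ij}\,\Dd(\cdot;\barphi)$ does not converge pointwise and your Riemann-sum argument does not close. The missing idea is that summation by parts must be performed on the \emph{averaged} functionals $\Vd_\alpha$, $\Vd_{xy}$ rather than pointwise: one uses the separated structure \eqref{eq:barphix}--\eqref{eq:barphiy} (the $x$-oscillation enters only through $\barphi(\cdot,0)$ and the $y$-oscillation only through $\barphi(0,\cdot)$), writes products of oscillating gradients as discrete gradients of squares via \eqref{eq:nablasq}, and then sums by parts against the smooth kernel $\rie_{ij}$ so that only $\barphi$ itself --- not its gradients --- appears in the limit. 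This is the content of Lemmas~\ref{lem:ibp}--\ref{lem:sbp}, which combined with \eqref{eq:barphi:cnvg} and the continuity of $\V_\alpha,\V_{xy}$ give $S_1\to0$; without it, the quadratic-gradient term (a product of two merely weakly converging sequences) is not controlled.
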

\noindent
Proposition~\ref{prop:qv} verifies the condition~\eqref{eq:qvCnd} and hence completes the proof of Theorem~\ref{thm:main}.
The proof of Proposition~\ref{prop:qv} is carried out in Sections~\ref{sec:S1}--\ref{sec:S2}.

\subsection{Proof of Corollary~\ref{cor:main}}
Fix $ a<\infty $, throughout this proof we assume $ x,y\in[0,aL]\cap\Z $ and write $ c=c(a) $ to simplify notation.
The first step is to express $ H $ in term of $ \phi $.
To this end, write 
\begin{align}
	\label{eq:Hlog}
	H(x,y) = L\log_\q\big(\phi(x,y)\big) = L\log_\q\big( \barphi(x,y) +  u(x,y) \big).
\end{align}
Recall that $ \phi= \q^{\frac1L H} $,
and that $ H(0,0):=0 $ and $ H $ is $ 1 $-Lipschitz (from the definition of height function).
Hence 
\begin{align}
	\label{eq:bdd0inf}
	\tfrac{1}{c} \leq \phi(x,y) \leq c,
	\quad
	\tfrac{1}{c} \leq \barphi(x,y) \leq c,
	\quad
 	x,y \in[0,aL]\cap\Z.
\end{align}
In~\eqref{eq:Hlog}, Taylor expand the function $ \log_\q(\barphi+u) $ in $ u $ to the first order, with the aid preceding bounds, we have
\begin{align}
	\label{eq:Hexpand}
	H(x,y) = L \log_\q(\barphi(x,y)) + \tfrac{L}{\barphi(x,y)\log\q} u(x,y) + LR(x,y), 
\end{align}
for some remainder $ R $ such that 
\begin{align}
	\label{eq:Rbd}
	|R(x,y)| \leq cu^2(x,y).
\end{align}

Take expectation in~\eqref{eq:Hexpand}, subtract the result from~\eqref{eq:Hexpand}.
With $ \Ex[u(x,y)]=0 $, we have
\begin{align*}
	H(x,y) - \Ex[H(x,y)]
	= 
	\tfrac{L}{\barphi(x,y)\log\q} u(x,y) + L\big(R(x,y) - \Ex[R(x,y)]\big), 
\end{align*}
Recall the scaling convention from Definition~\ref{defn:scale}.
Divide both sides by $ \sqrt{L} $, with $ \log\q = \beta_1-\beta_2 $, we have
\begin{align}
	\label{eq:corH}
	\tfrac{1}{\sqrt{L}} \big( H_L - \Ex[H_L] \big)
	= 
	\tfrac{1}{\barphi_L(\beta_1-\beta_2)} \sqrt{L} u_L + \sqrt{L}\big(R_L - \Ex[R_L]\big), 
\end{align}

From Theorem~\ref{thm:main} we already have $ \sqrt{L} u_L \to U $ in finite dimensional distributions.
This together with~\eqref{eq:barphi:cnvg} and~\eqref{eq:bdd0inf} gives
$ \frac{1}{\barphi_L(\beta_1-\beta_2)} \sqrt{L} u_L \to \tilde{U} $ in finite dimensional distributions.
To control the last term in~\eqref{eq:corH}, we calculate the second moment of $ u(x,y) $ from~\eqref{eq:u}.
By \eqref{eq:condE-xi}, the discrete noise $ \dnoise(x,y) $, $ x,y\in\Z_{> 0} $ are uncorrelated, so
$
	\Ex[ u(x,y)^2 ]
	=
	\sum_{x',y'\in\Z_{> 0}} \rieI(x-x',y-y')^2 \Ex[ \dnoise(x',y')^2 ].
$
Further using the bounds on $ \rie $ from Lemma~\ref{lem:bound-R} and the bound on $ \Ex[ \dnoise(x',y')^2 ] $
from Lemma~\ref{lem:cond-mom-xi}, we conclude $ \Ex[u(x,y)^2] \leq cL^{-1} $.
Combining this with \eqref{eq:Rbd} gives $ \Ex|R(x,y)| \leq cL^{-1} $.
From this, we see that the last term in~\eqref{eq:corH} converges to zero
in finite dimensional distributions. 
This completes the proof.

\section{Proof of Proposition~\ref{prop:qv}\ref{prop:qv:S1}}
\label{sec:S1}
Recall that $ (\x_1,\y_1),\ldots,(\x_n,\y_n)\in\Rp^2 $ are points fixed previously.
Hereafter, we fix further $ i,j\in\{1,\ldots,n\} $.
Recall from Lemma~\ref{lem:apribd} 
that $ \barphi_L(x,y) = \barphi(Lx,Ly) $ converges uniformly to $ \Phi $.
On the other hand, from the integrated representation~\eqref{eq:barphi},
it is not hard to check that $ L \nabla_x \phi(Lx,Ly) \not\to \partial_x\Phi $
and $ L \nabla_y \phi(Lx,Ly) \not\to \partial_y\Phi $ in general.
That is, derivatives of $ \barphi $ do \emph{not} converge pointwisely.
Given that the quantities $ \D $ and $ \Dd $ (defined in~\eqref{eq:D} and \eqref{eq:Dd}) involves gradients, 
in order to show $ S_1\to\infty $, 
one needs to exploit the sum over $ s $ in~\eqref{eq:S1}, as well as the integral over $ x,y $ in~\eqref{eq:QV}.
The sum and integral smear out the possibly fluctuating derivatives.
In the following two lemmas we expose the aforementioned smearing effect
via integration-by-parts and summation-by-parts formulas.

Let $ \Lip(\Rp) $ and $ \Lip(\Rp^2) $ denote the spaces of functions 
that are uniformly Lipschitz respectively over compact subsets of $ \Rp $ and $ \Rp^2 $.
Following the preceding discussion, instead of Lipschitz norms,
we equip $ \Lip(\Rp) $ and $ \Lip(\Rp^2) $
with the topology of uniform convergence over compact subsets.
Recall that $ \RieI_{ij}(x,y) := \RieI(\x_i-x, \y_i-y) \RieI(\x_j-x,\y_j-y) $.
For $ \alpha=x,y $, consider the map
\begin{align}
	\label{eq:U}
	\V_\alpha : \Lip(\Rp^2) \longrightarrow \R,
	\quad
	\V_\alpha(\Phi)
	&:=
	\int_{\Rp^2}
	\RieI_{ij}(x,y) \ \partial_{\alpha} \Phi (x,y) \cdot\Phi(x,y) \ dx dy.
\end{align}
For given $ \chi,\psi\in\Lip(\Rp) $, let $ \Phi=\Phi(\chi,\psi) $ defined through~\eqref{eq:Phi:mild}.
Consider the following map $ \V_{xy}: \Lip(\Rp)\times\Lip(\Rp)\to\R $:
\begin{align} \label{e:def-Vxy}
	\V_{xy}(\chi,\psi)
	&:=
	\int_{\Rp^2}
	\RieI_{ij}(x,y)
	\,
	\partial_{x} \Phi(x,y)
	\cdot
	\partial_{y} \Phi(x,y)
	\ dxdy,
	\quad
	\Phi=\Phi(\chi,\psi) \text{ via }\eqref{eq:Phi:mild}.
\end{align}

\begin{lem}
\label{lem:ibp} 
For $ \alpha,\beta\in\{x,y\} $,
the maps $ \V_\alpha : \Lip(\Rp^2) \to \R $, $ \V_{xy} : \Lip(\Rp)^2 \to \R $ are continuous
(under uniform topology, as declared previously).
%
%
\end{lem}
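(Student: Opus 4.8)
The plan is to handle the linear-gradient map $\V_\alpha$ and the quadratic-gradient map $\V_{xy}$ separately, in both cases converting the gradient hitting $\Phi$ into a gradient hitting the explicit kernel $\RieI_{ij}$ via integration by parts, so that the resulting integrand depends on $\Phi$ only through its values (not its derivatives), whence continuity under uniform convergence is immediate.

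First, for $\V_\alpha$ with $\alpha=x$ (the case $\alpha=y$ being symmetric), I would write $(\partial_x\Phi)\cdot\Phi = \tfrac12\partial_x(\Phi^2)$ and integrate by parts in $x$ over $\Rp$. The boundary term at $x=0$ involves $\Phi(0,y)^2=\psi(y)^2$ times the value of $\RieI_{ij}$ there, and the boundary term at $x=\infty$ vanishes because $\RieI_{ij}(\x_i-x,\cdot)\RieI_{ij}(\x_j-x,\cdot)$ is supported on $x\le\min(\x_i,\x_j)$ (the indicators in $\RieI$). One must be slightly careful that $\RieI_{ij}$, as a function of $x$, is only piecewise smooth: it has jumps at $x=\x_i$ and $x=\x_j$ coming from the indicators $\ind_{\set{\x_i-x\ge0}}$, $\ind_{\set{\x_j-x\ge0}}$. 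So the integration by parts produces, in addition to the $x=0$ boundary term, jump terms at $x=\x_i$ and $x=\x_j$ of the form $\Phi(\x_i,y)^2$ times $\Rie$-factors, plus the bulk term $-\tfrac12\int \partial_x\RieI_{ij}\cdot\Phi^2$, where $\partial_x\RieI_{ij}$ is the classical derivative of the smooth part (which is bounded on the relevant compact region by~\eqref{eq:Riebdd}). Each of these three pieces is a continuous functional of $\Phi$ in the topology of uniform convergence on compacts, since it involves only $\Phi$ evaluated at points or integrated against bounded kernels over a bounded region; integrating in $y$ preserves this. Hence $\V_x$ is continuous.

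Second, for $\V_{xy}$, the new feature is that $\chi,\psi$ enter through the solution formula~\eqref{eq:Phi:mild}, and one must also verify that the map $(\chi,\psi)\mapsto\Phi$ is continuous from $\Lip(\Rp)^2$ (uniform-on-compacts topology) into the appropriate space. The subtlety is that $\V_{xy}$ involves \emph{both} $\partial_x\Phi$ and $\partial_y\Phi$, and one cannot simply integrate by parts to remove both. The key identity I would use is the PDE~\eqref{eq:te} itself: $\partial_{xy}\Phi = -\beta_2\partial_x\Phi-\beta_1\partial_y\Phi$. Writing $\partial_x\Phi\cdot\partial_y\Phi = \partial_y(\Phi\,\partial_x\Phi) - \Phi\,\partial_{xy}\Phi = \partial_y(\Phi\,\partial_x\Phi) + \Phi(\beta_2\partial_x\Phi+\beta_1\partial_y\Phi)$, I reduce the quadratic term to a total $y$-derivative plus linear-gradient terms of exactly the type controlled by $\V_x$ and $\V_y$ above. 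Integrating the total-derivative piece by parts in $y$ (again tracking the boundary term at $y=0$, where $\Phi\,\partial_x\Phi = \psi\cdot\partial_x\Phi|_{y=0}$, the jump terms at $y=\y_i,\y_j$, and the bulk term against $\partial_y\RieI_{ij}$) leaves an expression in which $\Phi$ appears only linearly-in-gradient or not at all — and one more integration by parts in $x$ on the surviving $\partial_x\Phi$ factors removes those, at the cost of further boundary/jump terms evaluated on the axes and along $x=\x_i,\x_j$. What remains is a finite sum of terms, each of which is an integral of a bounded kernel (derivatives of $\Rie$, controlled by~\eqref{eq:Riebdd}) against a product of two values of $\Phi$, over a bounded region, together with lower-dimensional boundary integrals of the same shape. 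To finish I note that $\Phi\mapsto$ (these terms) is continuous in uniform-on-compacts topology, and that $(\chi,\psi)\mapsto\Phi$ and $(\chi,\psi)\mapsto\Phi|_{\text{axes}}$ are continuous: indeed from~\eqref{eq:Phi:mild} $\Phi$ is an affine functional of $(\chi,\chi',\psi,\psi')$, and after integrating by parts in~\eqref{eq:Phi:mild} to remove $\chi',\psi'$ (using~\eqref{eq:Riebdd} and the boundary values $\chi(0)=\psi(0)$), $\Phi$ becomes an affine functional of $(\chi,\psi)$ given by integration against bounded kernels, hence continuous on compacts.

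The main obstacle I expect is bookkeeping rather than conceptual: carefully accounting for \emph{all} the boundary and jump contributions produced by the repeated integrations by parts — the axes $x=0$ and $y=0$, the corner $(0,0)$, and the interior discontinuity lines $x=\x_i$, $x=\x_j$, $y=\y_i$, $y=\y_j$ of the piecewise-smooth kernel $\RieI_{ij}$ — and checking that every resulting term is genuinely a continuous functional of $\Phi$ (equivalently of $(\chi,\psi)$) in the uniform-on-compacts topology. No term should require a derivative bound on $\Phi$ once the reductions are complete; that is precisely the "smearing" the lemma is exploiting, and verifying it holds in each term is where the care lies.
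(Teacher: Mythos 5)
Your treatment of $\V_\alpha$ is essentially the paper's: write $\partial_x\Phi\cdot\Phi=\tfrac12\partial_x(\Phi^2)$ and integrate by parts against the smooth kernel, so that only values of $\Phi$ survive. (One small inaccuracy: the product of indicators in $\RieI_{ij}$ equals $\ind_{\set{x\le \x_i\wedge\x_j}}\ind_{\set{y\le \y_i\wedge\y_j}}$, so there is a single support boundary at $x=\x_i\wedge\x_j$ rather than separate jumps at $\x_i$ and at $\x_j$; the paper simply restricts the integral to $[0,\barx]\times[0,\bary]$ from the outset, which removes the bookkeeping you worry about.) For $\V_{xy}$ your route is genuinely different from the paper's. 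The paper differentiates the mild formula~\eqref{eq:Phi:mild} directly, isolating the only occurrences of $\chi'$ and $\psi'$ as the single products $\Rie(0,y)\chi'(x)$ and $\Rie(x,0)\psi'(y)$, collecting everything else into a class $\Generic$ of derivative-free expressions, and then integrating by parts term by term in the resulting four-way expansion~\eqref{eq:Vxy1}--\eqref{eq:Vxy4}. You instead invoke the telegraph equation~\eqref{eq:te} to write $\partial_x\Phi\,\partial_y\Phi=\partial_y(\Phi\,\partial_x\Phi)+\beta_2\,\Phi\,\partial_x\Phi+\beta_1\,\Phi\,\partial_y\Phi$, which reduces the quadratic-gradient term to the already-controlled linear-gradient functionals $\V_x,\V_y$ plus a total $y$-derivative whose boundary and bulk contributions are again perfect $x$-derivatives of $\Phi^2$ (respectively $\chi^2$ on the axis). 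This is arguably cleaner and shorter, and it makes transparent why no derivative of $\Phi$ survives; what it costs you is the obligation to justify that the mild solution with merely Lipschitz data satisfies the PDE and the product-rule identity almost everywhere, and that the a.e.\ integrations by parts are legitimate (Lipschitz in each variable suffices, and the structure $\partial_x\Phi=\Rie(0,y)\chi'(x)+(\text{Lipschitz})$ from~\eqref{eq:Phi:mild} delivers this), whereas the paper's expansion works directly from the explicit formula and never needs the PDE. Your final observation that $(\chi,\psi)\mapsto\Phi$ is continuous in the uniform-on-compacts topology, after integrating by parts in~\eqref{eq:Phi:mild} to remove $\chi',\psi'$, is correct and is also implicitly used by the paper. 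I see no gap beyond these regularity points, which are of the same order of difficulty as what the paper itself handles.
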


\begin{proof}
We begin with $ \V_\alpha $.
Take $ \alpha=x $ to simplify notation. The case $ \alpha=y $ follows exactly the same.
To simplify notation,
set $ \barx:=\x_i\wedge\x_j $ and $ \bary:=\y_i\wedge\y_j $
and $ \Rie_{ij}(x,y) := \Rie(\x_i-x, \y_i-y) \Rie(\x_j-x,\y_j-y) $.
In~\eqref{eq:U}, writing $ \partial_x\Phi\cdot\Phi =\frac12\partial_x(\Phi^2) $, we have
\begin{align*}
	\V_x(\Phi)
	=
	\frac12\int_{0}^{\barx}\int_{0}^{\bary}
	\Rie_{ij}(x,y)
	\,
	\partial_x(\Phi^2(x,y))
	\, dx dy.
\end{align*}
Integration by parts in $ x $ gives
\begin{align}
\label{eq:VVx}
	\V_x(\Phi)
	=
	\frac12 \int_{0}^{\bary}
	\Rie_{ij}(x,y) \Phi^2(x,y) \Big|_{x=0}^{x=\barx} dy
	-
	\frac12\int_{0}^{\barx}\int_{0}^{\bary}
	\partial_x \Rie_{ij}(x,y) \cdot
	\Phi^2(x,y)dxdy.
\end{align}
Given that $ \Rie_{ij} $ is smooth (from Lemma~\ref{lem:bound-R}),
from~\eqref{eq:VVx} it is clear that $ \V_x $ is continuous in $\Phi$.

Turning to $ \V_{xy} $,
take $ x $-derivative in \eqref{eq:Phi:mild} to get
$ \partial_x\Phi(x,y) =  \Rie(0,y) \chi'(x) + G (x,y) $,
where
\begin{align}
	\notag
	G (x,y)
	:=&\psi(0) \partial_x \Rie(x,y) + \beta_1\Rie(0,y)\chi(x)
\\
	\label{eq:Gx}
	& +\int_0^y \partial_x \Rie(x,y-y')  \big(\psi'(y') + \beta_2 \psi(y') \big) dy'  
	+ \int_0^x \partial_x \Rie(x-x',y) \big(\chi'(x')+\beta_1\chi(x')\big) dx'.
\end{align}
Note that $ G $ involves the derivatives $ \chi' $ and $ \psi' $.
We integrate by parts to separate the dependence on $ \chi' $ and $ \psi' $ from the dependence on $ \chi $ and $ \psi $.
To state this precisely, consider the set $ \Generic $ that consists of
finite linear combinations of the following expressions
\begin{align}
	\label{eq:generic}
	\partial^{k_1} \Rie(x-x_1,y) \chi(x_1)
	\quad
	\partial^{k_2} \Rie(x,y-y_2) \psi(y_2),		
	\quad
	\int_0^x \partial^{k_3} \Rie(x-x',y) \chi(x') dx',
	\quad
	\int_0^y \partial^{k_4} \Rie(x,y-y') \psi(y') dy',
\end{align}
where $ k_i =(k_i,k'_i)\in\Z_{\geq 0}^2 $ are multi-indices (defined in Lemma~\ref{lem:bound-R}) with $ |k_i| \leq 3 $,
$ x_1\in\{0,x\} $, and $ y_2\in\{0,y\} $.
That is,
\begin{align}
	\label{eq:Generic}
	\Generic 
	:= 
	\Big\{ 
		\sum \alpha_\text{term} \cdot \big(\text{term in~\eqref{eq:generic}}\big) 
	\Big\}.
\end{align}
In~\eqref{eq:Gx}, integrating by parts in $ y' $ and in $ x' $ respectively for the first and second integrals, we have
\begin{align}
	\label{eq:Phix}
	\partial_x \Phi(x,y)
	=
	\Rie(0,y)\chi'(x) + \partial_x\Rie(x,0)\psi(y) + \generic_y,
\end{align}
for some $ \generic_y $ such that $ \generic_y, \partial_y \generic_y \in \Generic $.
A similarly calculation applied to $ \partial_y \Phi $ gives
\begin{align}
	\label{eq:Phiy}
	\partial_y \Phi(x,y)
	=
	\Rie(x,0)\psi'(y) + \partial_y\Rie(0,y)\chi(x) + \generic_x,
\end{align}
for some $ \generic_x $ such that $ \generic_x, \partial_x \generic_x \in \Generic $.

Inserting~\eqref{eq:Phix}--\eqref{eq:Phiy} into~\eqref{e:def-Vxy} gives
\begin{subequations}
\begin{align}
	\label{eq:Vxy1}
	\V_{xy}(\chi,\psi) 
	=& 
	\I\big( \Rie(0,y)\chi'(x)\cdot\Rie(x,0)\psi'(y) \big)
\\
	\label{eq:Vxy2}
	&
	+\I\big( \Rie(0,y)\chi'(x)\cdot\partial_y\Rie(0,y)\chi(x) \big) + \I\big( \partial_x\Rie(x,0)\psi(y)\cdot\Rie(x,0)\psi'(y)  \big)	
\\
	\label{eq:Vxy3}
	&
	+\I\big( \Rie(0,y)\chi'(x)\cdot\generic_x \big) + \I\big( \generic_y\cdot\Rie(x,0)\psi'(y)  \big)	
\\
	\label{eq:Vxy4}
	&
	+\I\big( (\partial_x\Rie(x,0)\psi(y) + \generic_y)\cdot(\partial_y\Rie(0,y)\chi(x) + \generic_x)  \big),
\end{align}
\end{subequations}
where
\begin{align} 
	\label{eq:I}
	\I(f)
	:=
	\int_{0}^{\barx}\int_{0}^{\bary} \Rie_{ij}(x,y) f(x,y) dxdy.
\end{align}
To complete the proof, we next argue that each term in~\eqref{eq:Vxy1}--\eqref{eq:Vxy4} is a continuous function of $ (\chi,\psi) $.
For~\eqref{eq:Vxy4}, we indeed have $ \partial_x\Rie(x,0)\psi(y), \partial_y\Rie(0,y)\chi(x) \in \Generic $.
Consequently, the expression
\begin{align*}
	(\partial_x\Rie(x,0)\psi(y) + \generic_y)\cdot(\partial_y\Rie(0,y)\chi(x) + \generic_x)
\end{align*}
defines a continuous function of $ (x,y,\chi,\psi) \in \Rp^2 \times C(\Rp)^2 $.
Given this property, and referring to~\eqref{eq:I}, we see that the term in~\eqref{eq:Vxy4} is a continuous function of $ (\chi,\psi) $.
Turning to~\eqref{eq:Vxy3}, we note that the terms involve $ \chi' $ and $ \psi' $.
We integrate by parts in $ x $ and $ y $, \emph{respectively} for the first and second term in~\eqref{eq:Vxy3}.
This removes the derivatives on $ \chi $ and $ \psi $.
Further, $ \generic_x $ and $ \generic_y $ remain $ \Generic $-valued upon differentiating in $ x $ and $ y $, respectively.
From this, we see that the terms in \eqref{eq:Vxy3} are continuous functions of $ (\chi,\psi) $.
Moving onto~\eqref{eq:Vxy2}, we write
\begin{align*}
	\Rie(0,y)\chi'(x)\cdot\partial_y\Rie(0,y)\chi(x) &= \Rie(0,y)\partial_y\Rie(0,y)\cdot \tfrac{1}{2}\tfrac{d~}{dx} \chi(x)^2,
\\
	\partial_x\Rie(x,0)\psi(y)\cdot\Rie(x,0)\psi'(y) &= \Rie(x,0)\partial_x\Rie(x,0) \cdot \tfrac{1}{2}\tfrac{d~}{dy} \psi(x)^2.
\end{align*}
Given these expressions, integrating by parts in $ x $ and $ y $, \emph{respectively} for the first and second term in~\eqref{eq:Vxy2},
we conclude that the terms are continuous functions of $ (\chi,\psi) $.
Finally, for~\eqref{eq:Vxy1}, straightforward integration by parts in $ x $ and $ y $
verifies that the term is a continuous function of $ (\chi,\psi) $.
\end{proof}

Next we turn to the discrete analog of Lemma~\ref{lem:ibp}.
Recall that $ \rieI_{ij}(x,y) := \rieI(L\x_i-x, \y_i-y) \RieI(L\x_j-x,\y_j-y) $.
For $ \alpha=x,y $, set
\begin{align}
	\label{eq:Udx}
	\Vd_\alpha
	&:=
	L^{-2}\sum_{x,y\in\Z_{> 0}}
	\rieI_{ij}(x,y) \, (L\nabla_\alpha \barphi(x,y)) \, \barphi(x,y),
\\
	\label{eq:Udxy}
	\Vd_{xy}
	&:=
	L^{-2} \sum_{x,y\in\Z_{> 0}}
	\rieI_{ij}(x,y) \, (L\nabla_x \barphi(x,y)) \, (L\nabla_y \barphi(x,y)).
\end{align}
Recall the scaling notation and interpolation convention from Definition~\ref{defn:scale}.

\begin{lem}
\label{lem:sbp} 
Abusing notation, we write $ \V_{xy}(\barphi_L) := \V_{xy}(\barphi_L(\Cdot,0),\barphi_L(0,\Cdot)) $.
Then, as $ L\to\infty $,
\begin{align*}
	\sum_{\alpha=x,y} \big| \Vd_\alpha - \V_\alpha(\barphi_L) \big|
	+
	\big| \Vd_{xy} - \V_{xy}(\barphi_L) \big|
	\longrightarrow
	0.
\end{align*}
\end{lem}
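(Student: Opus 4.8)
The plan is to establish the convergence term by term, matching the three discrete sums $\Vd_x,\Vd_y,\Vd_{xy}$ against their continuum counterparts $\V_x(\barphi_L),\V_y(\barphi_L),\V_{xy}(\barphi_L)$, using summation-by-parts as the discrete mirror of the integration-by-parts performed in the proof of Lemma~\ref{lem:ibp}. The starting point is that $\rieI_{ij}(x,y)$ is supported on $(x,y)$ with $0\le x\le L\barx$ and $0\le y\le L\bary$ (where $\barx=\x_i\wedge\x_j$, $\bary=\y_i\wedge\y_j$), so each sum in~\eqref{eq:Udx}--\eqref{eq:Udxy} is a finite Riemann-type sum over a box of side $\sim L\barx$ times $L\bary$, with mesh $L^{-1}$ in each direction and an overall $L^{-2}$ prefactor. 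By Lemma~\ref{lem:bound-R}, $L^k\nabla^k\rie(x,y)$ is uniformly bounded and converges uniformly to $\partial^k\Rie(\tfrac{x}{L},\tfrac{y}{L})$; by Lemma~\ref{lem:apribd}, $L\nabla_\alpha\barphi$ is uniformly bounded and $\barphi_L\to\Phi$ uniformly on compacts. The subtlety the lemma is built to handle is precisely that $L\nabla_\alpha\barphi(Lx,Ly)$ need \emph{not} converge to $\partial_\alpha\Phi(x,y)$; so one cannot simply pass to the limit inside the sum, and must instead reproduce the same rearrangements that made $\V_\alpha$ and $\V_{xy}$ continuous.

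First I would treat $\Vd_x$ (and symmetrically $\Vd_y$). Write $(L\nabla_x\barphi)(x,y)\,\barphi(x,y) = \tfrac{1}{2}\nabla_x(\barphi^2)(x,y) - \tfrac{1}{2L}(\nabla_x\barphi(x,y))^2$; the second piece carries an extra $L^{-1}$, is bounded by $cL^{-1}$ termwise by Lemma~\ref{lem:apribd}, and summed against $L^{-2}\rieI_{ij}$ over $O(L^2)$ sites contributes $O(L^{-1})\to 0$. For the main piece apply summation by parts in $x$: $\sum_x \rieI_{ij}(x,y)\nabla_x(\barphi^2)(x,y) = [\text{boundary terms in }x] - \sum_x (\nabla_x\rieI_{ij})(x,y)\,\barphi^2(x',y)$ with the appropriate shift $x'=x+1$ or $x-1$. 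This lands exactly on the discrete analog of~\eqref{eq:VVx}. Now every surviving factor converges uniformly: $L\nabla_x\rieI_{ij}(x,y)\to\partial_x\Rie_{ij}(\tfrac{x}{L},\tfrac{y}{L})$ by~\eqref{eq:rietoRie} together with the product rule (writing $L\nabla_x$ of a product of two shifted $\rieI$'s as a sum of two terms, one of which needs a further uniformly-small correction from the shift), $\barphi_L^2\to\Phi^2$ uniformly, and the $L^{-2}\sum$ converges to $\int_0^{\barx}\int_0^{\bary}$. Hence $\Vd_x$ converges to the right-hand side of~\eqref{eq:VVx} with $\Phi$ replaced by $\barphi_L$ — which is $\V_x(\barphi_L)$ — up to an error $o(1)$. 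The same argument, mutatis mutandis, gives $|\Vd_y-\V_y(\barphi_L)|\to 0$.

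The term $\Vd_{xy}$ is the heart of the matter and follows the structure of the $\V_{xy}$-part of the proof of Lemma~\ref{lem:ibp}. Using the integrated representation~\eqref{eq:barphi}, compute $L\nabla_x\barphi$ explicitly: applying $L\nabla_x$ to~\eqref{eq:barphi} produces a discrete version of~\eqref{eq:Phix}, namely $L\nabla_x\barphi(x,y) = \rie(0,y)\cdot(L\nabla_x\barphi(\cdot,0))(x) + \rie(x,0)\cdot \barphi(0,y)\cdot(\text{coefficient}\to\partial_x\Rie(x,0)) + \genericd_y$, where $\genericd_y$ is a finite linear combination of discrete analogs of the terms in~\eqref{eq:generic} (sums replacing integrals, $L\nabla^k\rie$ replacing $\partial^k\Rie$), chosen so that both $\genericd_y$ and $L\nabla_y\genericd_y$ stay in that discrete class; here the necessary discrete summation-by-parts in the convolution variable reproduces the $y'$- and $x'$-integration-by-parts used to pass from~\eqref{eq:Gx} to~\eqref{eq:Phix}. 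Likewise for $L\nabla_y\barphi$ via the discrete analog of~\eqref{eq:Phiy}. Substituting these two expansions into~\eqref{eq:Udxy} decomposes $\Vd_{xy}$ into the discrete analogs of~\eqref{eq:Vxy1}--\eqref{eq:Vxy4}. Each resulting block is then handled exactly as in Lemma~\ref{lem:ibp}: the block~\eqref{eq:Vxy4}-type term is already free of discrete gradients on $\barphi$, so it converges by uniform convergence of $\barphi_L$ and of the $L^k\nabla^k\rie$ factors plus Riemann-sum convergence; the~\eqref{eq:Vxy2}-- and~\eqref{eq:Vxy3}-type terms still carry one discrete gradient $L\nabla_x\barphi(\cdot,0)$ or $L\nabla_y\barphi(0,\cdot)$, which is removed by one more discrete summation by parts in $x$ or $y$ (using $\genericd_x,\genericd_y$ closed under the relevant discrete gradient); and the~\eqref{eq:Vxy1}-type term, bilinear in the two boundary gradients, is handled by summation by parts in both $x$ and $y$. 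In each case, once the discrete gradients on $\barphi$ have been shifted onto the smooth kernels, one matches against exactly the same post-integration-by-parts expression that defines $\V_{xy}(\barphi_L)$, so the difference is $o(1)$.

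The routine-but-nontrivial bookkeeping — and the main obstacle — is verifying that the discrete summation-by-parts produces \emph{precisely} the discretization of the continuum integration-by-parts identities, so that the two limits coincide rather than differing by a stray boundary term; concretely, one must track the index shifts ($x\mapsto x\pm 1$) introduced by $\nabla$, show the attendant corrections are $O(L^{-1})$ uniformly using~\eqref{eq:riebdd} and~\eqref{eq:phi:apribd} (the number of terms where the shift matters is $O(L)$ on each codimension-one boundary, times the $L^{-2}$ prefactor times $O(L)$ from the remaining free variable — marginal, so one needs the extra $L^{-1}$ smallness per correction), and check that the discrete class $\Genericd$ really is closed under $L\nabla_x$ and $L\nabla_y$ in the same way $\Generic$ is closed under $\partial_x,\partial_y$. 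Once this is in place, combining the three term-by-term estimates yields $\sum_{\alpha=x,y}|\Vd_\alpha-\V_\alpha(\barphi_L)| + |\Vd_{xy}-\V_{xy}(\barphi_L)|\to 0$, which is the claim.
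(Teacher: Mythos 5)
Your proposal is correct and follows essentially the same route as the paper: for $\Vd_\alpha$ the identity $(L\nabla_x\barphi)\,\barphi=\tfrac{L}{2}\nabla_x(\barphi^2)-\tfrac{1}{2L}(L\nabla_x\barphi)^2$ is algebraically the paper's symmetrization via $\tilde\Vd_x$ with the $O(L^{-1})$ remainder, followed by the same summation by parts and Riemann-sum passage; for $\Vd_{xy}$ you reproduce the paper's strategy of expanding $L\nabla_x\barphi$, $L\nabla_y\barphi$ from the integrated representation into boundary-gradient terms plus a discrete class $\Genericd$ closed under $L\nabla$, substituting into $\Vd_{xy}$, and mirroring the integration-by-parts blocks of Lemma~\ref{lem:ibp} by summation by parts before taking limits. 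The bookkeeping points you flag (index shifts, closure of $\Genericd$, $O(L^{-1})$ corrections) are exactly the ones the paper handles.
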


\begin{proof}
We begin by bounding $ |\Vd_\alpha - \V_\alpha(\barphi_L)| $.
Take $ \alpha=x $ to simplify notation. The case $ \alpha=y $ follows exactly the same.
Let $ \tilde \Vd_x $ denote the analog of $ \Vd_x $ 
where the last factor $\bar\phi(x,y)$ in \eqref{eq:Udx} is replaced by $\bar\phi(x+1,y)$.
Using
\begin{align}
	\label{eq:nablasq}
	\nabla f(x) \cdot f(x+1) + f(x) \cdot \nabla f(x) = \nabla f^2(x)
\end{align}
for $ f(x) = \barphi(x,y) $, we have
\begin{align*}
	\frac12( \Vd_x + \tilde{\Vd}_x )
	=
	\frac{1}{2L^{2}} \sum_{x,y\in\Z_{> 0}}
	\rieI_{ij}(x,y)
	L\nabla_x \big(\barphi^2(x,y) \big).
\end{align*}	
Set $ \rie_{ij}(x,y) := \rie(L\x_i-x, L\y_i-y) \Rie(\x_j-x,\y_j-y) $,
and recall that $ \barx := \x_i\wedge \x_j $ and $ \bary := \y_i\wedge \y_j $.
Further, given the bounds on $ \rie $ from Lemma~\ref{lem:bound-R}
and the bound on $ \nabla_x\barphi $ from Lemma~\ref{lem:apribd},
we have $|\Vd_x-\tilde \Vd_x|\le c L^{-1} $, so
\begin{align}
	\label{eq:Vdx}
	\Vd_x
	=
	\frac{1}{2L^2} 
	\sum_{x,y\in\Z_{> 0}}
	\rieI_{ij}(x,y) \, L\nabla_x (\barphi(x,y)^2 )
	+
	r
	=
	\frac{1}{2L^2} 
	\sum_{x=1}^{L\barx} \sum_{y=1}^{L\bary}
	\rie_{ij}(x,y) \, L\nabla_x (\barphi(x,y)^2 )
	+
	r,	
\end{align}
for some remainder term $ r $ such that $ |r| \leq cL^{-1} $.
In~\eqref{eq:Vdx}, applying summation by parts 
\begin{align*}
	\sum_{i=a}^b f(i) \cdot \nabla g(i)
	=
	f(i-1) g(i) \big|_{i=a}^{i=b+1}
	-
	\sum_{i=a}^b \nabla f(i-1) \cdot g(i)
\end{align*}
in the variable $ x $ gives
\begin{align*}
	\Vd_x
	=
	\frac{1}{2L} \sum_{y=1}^{L \bary}
	\rie_{ij}(x-1,y) \, \barphi(x,y)^2  \,\Big|_{x=1}^{x=L\barx+1}
	-
	\frac{1}{2L^2}  \sum_{x=1}^{L\barx} \sum_{y=1}^{L\bary}
	L\nabla_x \rie_{ij}(x-1,y)
	\cdot
	\barphi(x,y)^2
	+ r.
\end{align*}
Given Lemmas~\ref{lem:bound-R}--\ref{lem:apribd},
within in the last expression,
replacing $\rie(x,y)$ with $\Rie(\tfrac{x}{L},\tfrac{y}{L})$, and 
$ L {\nabla_x} {\rie(x,y)} $ with $\partial_x\Rie(\tfrac{x}{L},\tfrac{y}{L}) $ 
only introduce errors that converges to zero as $ L\to\infty $.
This gives
\begin{align}
\label{eq:Vdxx}
	\Vd_x
	=
	\frac{1}{2L} \sum_{y=1}^{L \bary}
	\Rie_{ij}(\tfrac{x-1}{L},\tfrac{y}{L}) \, \barphi_L(\tfrac{x}{L},\tfrac{y}{L})^2  \,\Big|_{x=1}^{x=L\barx+1}
	-
	\frac{1}{2L^2}  \sum_{x=1}^{L\barx} \sum_{y=1}^{L\bary}
	L\nabla_x \Rie_{ij}(\tfrac{x-1}{L},\tfrac{y}{L})
	\cdot
	\barphi_L(\tfrac{x}{L},\tfrac{y}{L})^2
	+ r',
\end{align}
for some $ r' $ such that $ |r'|\to 0 $, where $\Rie_{ij}$ is defined in proof of Lemma~\ref{lem:ibp}.
Compare~\eqref{eq:VVx} and \eqref{eq:Vdxx}.
Since $ \{\phi_L\}_L \subset C([0,\barx]\times[0,\bary]) $ is equicontinuous,
and since $ \Rie $ is smooth,
in~\eqref{eq:Vdxx}, replacing sums with integrals and replacing $ L\nabla_x  $ with $ \partial_x $
only introduce errors that converges to zero as $ L\to\infty $.
From this we conclude $ |\Vd_x - \V_x(\barphi_L)| \to 0 $.

Turning to showing $ | \Vd_{xy} - \V_{xy}(\barphi_L)| \to 0 $,
we rewrite~\eqref{eq:barphi} in a way similar to~\eqref{eq:Phi:mild}
(note that $ \barphi=\phi $ along the axises $ \set{0}\times\Z_{\geq 0} $ and $ \Z_{\geq 0}\times\set{0} $):
\begin{align}
\label{eq:barphi:}
\begin{split}
	\barphi(x,y)
	=
	\barphi(0,0) \rie(x,y)
	&+
	L^{-1} \sum_{y'=1}^y \rieI(x,y-y') \big(L\nabla_{y'}\phi(0,y'-1)+ L(1-b_2) \barphi(0,y'-1)\big)
\\
	&+
 	L^{-1} \sum_{x'=1}^x \rieI(x-x',y) \big(L\nabla_{x'}\phi(x'-1,0) + L(1-b_1) \barphi(x'-1,0)\big).
	\end{split}
\end{align}
Define the discrete analog of $ \Generic $ (as in~\eqref{eq:Generic}):
\begin{align}
	\notag
	&\Genericd
	:= 
	\Big\{ 
		\sum \alpha_{L,\text{term}} \cdot \big(\text{term in~\eqref{eq:genericd}}\big) 
		: 
		\#\{ \alpha_{L,\text{term}} \neq 0 \}\leq c,
		\quad
		\lim_{L\to 0} \alpha_{L,\text{term}} \in \R
	\Big\},
\end{align}
where,
with $ k_i =(k_i,k'_i)\in\Z_{\geq 0}^2 $ being multi-indices with $ |k_i| \leq 3 $,
and with $ x_1\in\{0,x\} $, $ y_2\in\{0,y\} $, and $ j_i,j'_i,j''_i \in\{0,\pm1,\pm2\} $,
the terms read
\begin{align}
\label{eq:genericd}
\begin{split}
	&L^{-|k_1|}\nabla^{k_1} \rie(x-x_1+j_1,y) \chi(x_1+j'_1),
	\quad
	L^{-|k_2|}\nabla^{k_2} \rie(x,y-y_2+j_2) \psi(y_2+j'_2),		
\\
	&
	L^{-1} \sum_{x'=1}^{x} L^{-|k_3|}\nabla^{k_3} \rie(x-x'+j_3,y+j'_3) \chi(x'+j''_3),
\\
	&
	L^{-1} \sum_{y'=1}^{y} L^{-|k_3|}\nabla^{k_3} \rie(x+j_4,y-y'+j'_4) \psi(y'+j''_4).
\end{split}
\end{align}

Under the preceding setup,
we perform procedures analogous to those leading up to~\eqref{eq:Phix}--\eqref{eq:Phiy},
with~\eqref{eq:Vdxx} in place of~\eqref{eq:Phi:mild},
$\Vd_{xy}$ in place of $\V_{xy}$,
$\rie$ and $L \nabla_x \rie$ in place of $\Rie$ and $\partial_x \Rie$, and
$(\barphi(\Cdot,0),\barphi(0,\Cdot))$  in place of $(\chi,\psi)$.
This gives 
\begin{align}
	\label{eq:barphix}
	L\nabla_x \barphi(x,y)
	&=
	\rie(0,y)\cdot L\nabla_x \barphi(x,0) + L\nabla_x \rie(x,0)\cdot\barphi(0,y) + \genericd_y,
\\	
	\label{eq:barphiy}
	L\nabla_y \barphi(x,y)
	&=
	\rie(x,0)\cdot L\nabla_y \barphi(0,y) + L\nabla_y \rie(0,y) \cdot\barphi(x,0) + \genericd_x,
\end{align}
for some $ \genericd_\alpha $ such that $ \genericd_\alpha, L\nabla_\alpha\genericd_\alpha \in \Genericd $.
For our purpose it is more convineint to change $ \barphi(0,y) \mapsto \frac12(\barphi(0,y)+\barphi(0,y+1)) $
and $ \barphi(x,0) \mapsto \frac12(\barphi(x,0)+\barphi(x+1,0)) $ in~\eqref{eq:barphix}--\eqref{eq:barphiy}.
To this end, using the bounds on $ L \nabla_\alpha \rie $ from Lemma~\ref{lem:bound-R}
and the bound on $ \nabla_\alpha \barphi $ from Lemma~\ref{lem:apribd},
we write
\begin{align}
	\tag{\ref*{eq:barphix}'}
	\label{eq:barphix:}
	L\nabla_x \barphi(x,y)
	&=
	\rie(0,y)\cdot L\nabla_x \barphi(x,0) + L\nabla_x \rie(x,0)\cdot\tfrac12\big( \barphi(0,y)+\barphi(0,y+1) \big) + \genericd_y +r_1,
\\	
	\tag{\ref*{eq:barphiy}'}
	\label{eq:barphiy:}
	L\nabla_y \barphi(x,y)
	&=
	\rie(x,0)\cdot L\nabla_y \barphi(0,y) + L\nabla_y \rie(0,y) \cdot \tfrac12\big( \barphi(x,0)+ \barphi(x+1,0) \big) + \genericd_x+r_2,
\end{align}
for some $ r_1,r_2 $ such that $ |r_1|,|r_2| \leq cL^{-1} $.

Inserting~\eqref{eq:barphix:}--\eqref{eq:barphiy:} into~\eqref{eq:Udxy} gives
\begin{subequations}
\begin{align}
	\label{eq:vxy1}
	\Vd_{xy}(\chi,\psi) 
	=& 
	\Id\Big( \rie(0,y)L\nabla_x \barphi(x,0)\cdot\rie(x,0)L\nabla_y \barphi(0,y) \Big)
\\
	\begin{split}
	\label{eq:vxy2}
	&
	+\Id\Big( \rie(0,y)\cdot L\nabla_x \barphi(x,0)\cdot L\nabla_y\rie(0,y)\cdot \tfrac12\big( \barphi(x,0)+ \barphi(x+1,0) \big)  \Big) 
\\
	&
	\quad\quad+\Id\Big( L\nabla_x\rie(x,0)\cdot\tfrac12\big( \barphi(0,y)+\barphi(0,y+1) \big)\cdot\rie(x,0)\cdot L\nabla_y \barphi(0,y)  \Big)	
	\end{split}
\\
	&
	\label{eq:vxy3}	
	+\Id\Big( \rie(0,y)L\nabla_x \barphi(x,0)\cdot\genericd_x \Big) + \Id\Big( \genericd_y\cdot\rie(x,0)L\nabla_y \barphi(0,y)  \Big)	
\\
	\begin{split}
	\label{eq:vxy4}	
	&	
	+\Id\Big( \Big(L\nabla_x\rie(x,0)\cdot\tfrac12\big( \barphi(0,y)+\barphi(0,y+1) \big) + \genericd_y \Big)
\\
	&
	\hphantom{+\Id\Big(\quad }
	\cdot \Big(L\nabla_y\rie(0,y)\cdot \tfrac12\big( \barphi(x,0)+\barphi(x+1,0) + \genericd_x)  \Big)\Big)
	\end{split}
\\
	\notag
	&+r',
\end{align}
\end{subequations}
where
$
	\Id(f)
	:=
	L^{-2} \sum_{x=1}^{L\barx}\sum_{y=1}^{L\bary} \rie_{ij}(x,y) f(x,y).
$
Here $ r' $ collects all the terms that involve $ r_1 $ and $ r_2 $ from the expansion.
Given the bounds from Lemmas~\ref{lem:bound-R} and \ref{lem:apribd},
we indeed have $ |r'| \leq c L^{-1} $.
Also, using~\eqref{eq:nablasq} for $ f(x)=\barphi(x,0) $ and for $ f(y)=\barphi(0,y) $,
we rewrite the terms in~\eqref{eq:vxy2} as
\begin{subequations}
\begin{align}
	\tag{\ref*{eq:vxy2}'}
	\label{eq:vxy2:}
	\Id\Big( \rie(0,y)\cdot L\nabla_y\rie(0,y)\cdot L\nabla_x \big( \barphi(x,0)^2 \big)  \Big) 
	+\Id\Big( L\nabla_x\rie(x,0)\cdot\rie(x,0)\cdot L\nabla_y \big( \barphi(0,y)^2 \big)  \Big).
\end{align}
\end{subequations}
Recall that, in the proof of Lemma~\ref{lem:ibp},
we processed the terms in~\eqref{eq:Vxy1}--\eqref{eq:Vxy4} through integration by parts
so that the resulting expressions do not involve derivatives of $ \chi $ or of $ \psi $.
Here, similarly processing \eqref{eq:vxy1}, \eqref{eq:vxy2:}, and \eqref{eq:vxy3}--\eqref{eq:vxy4} (via summation by parts)
gives expressions that do not involve discrete gradients of $ \barphi(x,0) $ or of $ \barphi(0,y) $.
Given that $ \{\phi_L(x,y)\}_L \subset C([0,\barx]\times[0,\bary]) $ is equicontinuous, 
and given the bounds from Lemma~\ref{lem:bound-R},
within the \emph{processed} expressions of~\eqref{eq:vxy1}, \eqref{eq:vxy2:}, and \eqref{eq:vxy3}--\eqref{eq:vxy4}, 
replacing $ \rie $ and $ L^{|k|}\nabla^k\rie $ with $ \Rie $ and $ \partial^k \Rie $
and replacing the sums with integrals only cause errors that converge to zero as $ L\to\infty $.
From this we conclude $ | \Vd_{xy} - \V_{xy}(\barphi_L) | \to 0 $.
\end{proof}

Based on Lemmas~\ref{lem:ibp}--\ref{lem:sbp}, we finish the proof of Proposition~\ref{prop:qv}\ref{prop:qv:S1}.
With $ S_1 $ defined in \eqref{eq:S1}, 
referring to~\eqref{eq:D}, \eqref{eq:Dd}, \eqref{eq:U}--\eqref{eq:Udx}, and \eqref{eq:Udx}--\eqref{eq:Udxy},
we decompose $ S_1 = \sum_{\beta=xy,x,y}\sum_{i=1,2} S^i_{1\beta} $, where
\begin{align*}
	&
	S^1_{1xy} := (\gamma_{xy}-\beta_1+\beta_2) \Vd_{xy},&
	&
	S^1_{1x} := (\gamma_{x}-\beta_2 (\beta_2 -\beta_1)) \Vd_{x},&
	&
	S^1_{1y} := (\gamma_{y}+\beta_1 (\beta_2 -\beta_1)) \Vd_{y},	
\\
	&
	S^2_{1xy} :=  (\beta_1+\beta_2) \big( \Vd_{xy}-\V_{xy}(\Phi) \big),&
	&
	S^2_{1x} :=  \beta_2 (\beta_2 -\beta_1) \big( \Vd_{x}-\V_{x}(\Phi) \big),&
	&
	S^2_{1y} :=  - \beta_1 (\beta_2 -\beta_1)  \big( \Vd_{y}-\V_{y}(\Phi) \big).
\end{align*}
For $ S^1_{1\beta} $, $ \beta=xy,x,y $, it is readily checked from Lemma~\ref{lem:apribd} that $ |\Vd_\beta| \leq c $.
From~\eqref{eq:gamma}, we have that $ \gamma_{xy}\to(\beta_1+\beta_2) $,
$ \gamma_{x}\to \beta_2 (\beta_2 -\beta_1) $, and $ \gamma_{y}\to -\beta_1 (\beta_2 -\beta_1) $.
Hence $ S^1_{1\beta} \to 0 $.
As for $ S^2_{1\beta} $, $ \beta=xy,x,y $, further decompose
$ \Vd_{\beta}-\V_{\beta}(\Phi) = (\Vd_\beta - \V_\beta(\barphi_L)) + (\V_\beta(\barphi_L) - \V_{\beta}(\Phi)) $.
Using Lemmas~\ref{lem:ibp}--\ref{lem:sbp} and~\eqref{eq:barphi:cnvg} to bound the respective terms,
we conclude $ S^2_{1\beta}\to 0 $.

\section{Proof of Proposition~\ref{prop:qv}\ref{prop:qv:S2}}
\label{sec:S2}

The proof begins by deriving an integral representation for $ L\nabla_x\phi $ and $ L\nabla_y\phi $.
To this end, rewrite~\eqref{eq:phi} as
\begin{align*}
	\phi(x,y) = \barphi(x,y)+
  	\sum_{x'\in(0,x]}\sum_{y'\in(0,y]} \rie(x-x',y-y') \dnoise(x',y').
\end{align*}
Take discrete derivatives on both sides to get
\begin{align}
	\label{eq:phix}
	&L\nabla_x \phi(x,y)
	= 
	L\nabla_x \barphi(x,y)
	+
	\bk_x(x,y)
	+
	\bdy_x(x,y),&
	&
	L\nabla_y \phi(x,y)
	= 
	L\nabla_y \barphi(x,y)
	+
	\bk_y(x,y)
	+
	\bdy_y(x,y),
\end{align}
where
\begin{align}
	\label{eq:bk}
	\bk_\alpha(x,y)
	&:=
	\sum_{x',y'\in\Z_{> 0}} L\nabla_\alpha \rieI(x-x',y-y') \dnoise(x',y'),
\\
	\label{eq:bdy}
	\bdy_x(x,y)
	&:=
	L\sum_{y'\in\Z_{> 0}} \rieI(0,y-y') \dnoise(x+1,y'),
	\quad
	\bdy_y(x,y)
	:=
	L\sum_{x'\in\Z_{> 0}} \rieI(x-x',0) \dnoise(x',y+1).
\end{align}
%

\begin{lem}
\label{lem:ptmom}
For any fixed $ a\in[1,\infty) $ and $ f:\Z_{\geq 0}^2 \to \R $, $ \alpha=x,y $,
we have
\begin{align*}
	\sup_{x,y \in[0,aL]\cap\Z} \Ex[(\bk_\alpha(x,y))^2]
	\leq
	c(a) L^{-1},
	\quad\quad
	\sup_{x,y\in[0,aL]\cap\Z} \Ex[(\bdy_\alpha(x,y))^2]
	\leq
	c(a).
\end{align*}
\end{lem}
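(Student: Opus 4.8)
The plan is to estimate both second moments by the same short computation: expand the square, use that the discrete noise $ \dnoise(x',y') $, $ (x',y')\in\Z_{>0}^2 $, is pairwise uncorrelated to discard the off-diagonal terms, and then insert the sharp pointwise bound on $ \Ex[\dnoise^2] $ from Lemma~\ref{lem:cond-mom-xi} together with the uniform bounds on the discrete Riemann function from Lemma~\ref{lem:bound-R}. The decorrelation is the same fact used in the proof of Corollary~\ref{cor:main}: given two distinct sites, one of them --- say the second --- has a coordinate strictly larger than that of the first, so $ \dnoise(x'_1,y'_1) $ is measurable with respect to the L-shaped $ \sigma $-algebra appearing in~\eqref{eq:condE-xi} for $ \dnoise(x'_2,y'_2) $, and the tower property forces $ \Ex[\dnoise(x'_1,y'_1)\dnoise(x'_2,y'_2)]=0 $. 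By construction (cf.\ the derivation of~\eqref{eq:phix}), the sums~\eqref{eq:bk} and~\eqref{eq:bdy} run over distinct sites, so $ \Ex[(\,\cdot\,)^2] $ reduces to its diagonal part.

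For $ \bk_\alpha $: in~\eqref{eq:bk} the coefficient of $ \dnoise(x',y') $ is $ L\nabla_\alpha\rie(x-x',y-y') $, which the indicators in $ \rieI $ restrict to $ 1\le x'\le x $, $ 1\le y'\le y $, i.e.\ to at most $ xy\le(aL)^2 $ sites; on this range $ |L\nabla_\alpha\rie|\le c(a) $ by~\eqref{eq:riebdd} (with $ |k|=1 $) and $ \Ex[\dnoise(x',y')^2]\le cL^{-3} $ by~\eqref{eq:dnoise:mom} (with $ k=2 $), so
\[
	\Ex\big[\bk_\alpha(x,y)^2\big]
	=\sum_{x',y'\in\Z_{>0}}\big(L\nabla_\alpha\rieI(x-x',y-y')\big)^2\,\Ex\big[\dnoise(x',y')^2\big]
	\le (aL)^2\cdot c(a)\cdot cL^{-3}=c(a)L^{-1}.
\]
For $ \bdy_\alpha $ the computation is identical but with a one-dimensional sum: taking $ \alpha=x $ in~\eqref{eq:bdy}, the sites $ (x+1,y') $ are pairwise distinct, the coefficient $ L\rieI(0,y-y') $ is supported on $ 1\le y'\le y\le aL $ and bounded by $ c(a)L $ via~\eqref{eq:riebdd} (with $ |k|=0 $), hence
\[
	\Ex\big[\bdy_x(x,y)^2\big]
	=L^2\sum_{y'\in\Z_{>0}}\rieI(0,y-y')^2\,\Ex\big[\dnoise(x+1,y')^2\big]
	\le (aL)\cdot c(a)L^2\cdot cL^{-3}=c(a);
\]
the case $ \alpha=y $ is the same with the two coordinates swapped. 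All the bounds are uniform over $ x,y\in[0,aL]\cap\Z $.

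The one place where some care is needed --- and what I regard as the crux --- is that both the decorrelation \emph{and} the sharp bound $ \Ex[\dnoise^2]\le cL^{-3} $ are essential: applying instead the crude uniform bound $ |\dnoise|\le cL^{-1} $ of~\eqref{eq:dnoise:uniform} termwise (triangle inequality) would give only $ |\bdy_\alpha(x,y)|\le c(a)L $, hence $ \Ex[\bdy_\alpha(x,y)^2]\le c(a)L^2 $, which is useless. Everything else is bookkeeping of powers of $ L $ against the sizes of the summation ranges, so I anticipate no further obstacle.
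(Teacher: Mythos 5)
Your proof is correct and follows essentially the same route as the paper's: expand the square, kill the off-diagonal terms using the martingale property \eqref{eq:condE-xi} of $\dnoise$, and then combine the bound $\Ex[\dnoise^2]\le cL^{-3}$ from Lemma~\ref{lem:cond-mom-xi} with the uniform bounds on $\rie$ and $L\nabla_\alpha\rie$ from Lemma~\ref{lem:bound-R} and a count of the summation range ($cL^2$ terms for $\bk_\alpha$, $cL$ for $\bdy_\alpha$). Your explicit justification of the decorrelation via the tower property is a welcome addition, since the paper's proof mis-attributes that fact to Lemma~\ref{lem:bound-R} rather than to \eqref{eq:condE-xi}.
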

\begin{proof}
For simpler notation, throughout the proof we write $ c=c(a) $,

Calculate the second moment of $ \bk_\alpha(x,y) $~\eqref{eq:bk}.
By Lemma~\ref{lem:bound-R}, the variables $ \dnoise(x,y) $, $ x,y\in\Z_{>0}^2 $ are uncorrelated, so
\begin{align}
	\label{eq:bk:contraction}
	\Ex[(\bk_\alpha(x,y))^2]
	=
	\sum_{x',y'\in\Z_{>0}} 
	\big( L\nabla_\alpha\rieI(x-x',y-y') \big)^2
	\Ex\big[\dnoise(x',y')^2 \big].
\end{align}
By Lemma~\ref{lem:bound-R}, the term $ L\nabla_\alpha\rieI(x-x',y-y') $ is bounded by $ c $.
With $ x',y' \in [0,aL]\cap\Z $, the number of terms within the sum is $ \leq c L^2 $.
By Lemma~\ref{lem:cond-mom-xi}, the $ \Ex[\dnoise(x,y)^2 ]\leq cL^{-3} $.
From these discussions, we concludes the desired bound for $ \bk_\alpha $.

We now turn to bounding $ \bdy_\alpha $. Take $ \alpha=x $ to simplify notation.
Following the same argument for obtaining~\eqref{eq:bdy:contraction}, here we have
\begin{align}
	\label{eq:bdy:contraction}
	\Ex[(\bdy_x(x,y))^2]
	=
	 L^2\sum_{y'\in\Z_{>0}} \big( \rieI(0,y-y') \big)^2\Ex\big[\dnoise(x+1,y')^2 \big].
\end{align}
By Lemma~\ref{lem:bound-R}, the term $ \rieI(0,y-y') $ is bounded by $ c $.
With $ y' \in [0,aL]\cap\Z $, the number of terms within the sum is $ \leq c L $.
By Lemma~\ref{lem:cond-mom-xi}, the $ \Ex[\dnoise(x,y)^2 ]\leq cL^{-3} $.
From these discussions, we concludes the desired bound for $ \bdy_\alpha $.
\end{proof}


Having established Lemma~\ref{lem:ptmom}, we now proceed to bounding $ S_2 $.
To simplify notation, set
\begin{align*}
	\quad\quad
	&L^{-2}\sum_{s=1}^{c_*L^2} f(x(s),y(s))
	:=
	\Av(f).
\end{align*}
First, recall from \eqref{eq:S2} and \eqref{eq:Dd} that
$ S_2 $ involves the term $ \phi(x,y)  L\nabla_x \phi(x,y) $ and $ \phi(x,y) L\nabla_y \phi(x,y) $ via $ \Dd(x,y;\phi) $.
From Theorem~\ref{thm:lln} and \eqref{eq:barphi:cnvg}, we have that
$
	\norm{\phi_L-\barphi_L}_{C(\Rp^2)} \to 0,
$
as $ L\to\infty$.
From this, together with the bound \eqref{eq:phi:apribd} on $ \phi $ and the bounds on $ \rie $ from Lemma~\ref{lem:bound-R},
we see that
\begin{align*}
	\Av\Big(
	\rieI_{ij}\cdot
	\big( 
		\gamma_x \phi L\nabla_x \phi -\gamma_x \barphi L\nabla_x \phi
		+
		\gamma_y \phi  L\nabla_y \phi - \gamma_y\barphi L\nabla_y \phi
	\big)
	\Big)
	\longrightarrow_\text{P}0.
\end{align*}
Granted this, instead of showing $ S_2 \to_\text{P} 0 $, it suffices to show $ \hat{S}_2 \to_\text{P} 0 $,
where
\begin{align}
	\label{eq:hatS2}
	\hat{S}_2
	&:=
	L^{-2}\sum_{s=1}^{c_*L^2} \rieI_{ij}(x(s),y(s))\cdot \big( \Ddd(x(s),y(s);\barphi,\phi) - \Dd(x(s),y(s);\barphi) \big),
\\
	\label{eq:Ddd}
	\Ddd(x,y;\barphi,\phi)  &:=
   \gamma_{xy} \cdot
   	L \nabla_x \phi (x,y) \cdot L \nabla_y \phi(x,y)
    +\gamma_{x}\cdot \barphi(x,y) \cdot L\nabla_x \phi(x,y)
   + \gamma_{y}\cdot \barphi(x,y) \cdot L\nabla_y \phi(x,y).
\end{align}

Now, insert the expressions~\eqref{eq:phix} for $ L\nabla_x \phi(x,y) $ and $ L\nabla_y \phi(x,y) $ into the r.h.s.\ of~\eqref{eq:Ddd},
and plug the result into~\eqref{eq:hatS2}.
Expand the result accordingly, 
we have that, for some bounded, deterministic $ f_{\bk\bk},f_{\bk\bdy},g_{\bk\bdy},\ldots:\Z_{>0}^2 \to \R $,
\begin{align*}
	\hat{S}_2 
	&= 
	\Av\big( f_{\bk\bk}(\bk_x\bk_y) \big)
	+
	\Av\big( f_{\bk}\bk_x+g_{\bk}\bk_y) \big)
	+
	\Av\big( f_{\bk\bdy}\bk_x\bdy_y+g_{\bk\bdy}\bk_y\bdy_x\big)	
	+
	\Av\big( f_{\bdy}\bdy_x+g_\bdy\bdy_y\big)	
	+
	\Av\big( f_{\bdy\bdy}\bdy_x\bdy_y\big)
\\
	&:=	K_{\bk\bk} + K_{\bk} + K_{\bk\bdy}+K_{\bdy} + K_{\bdy\bdy}.
\end{align*}
Write $ \norm{\,\Cdot\,}_k := (\Ex[|\,\Cdot\,|^k])^{1/k} $ for the $ k $-th norm.
By triangle inequality and Cauchy--Schwarz inequality, for $ p \geq 1 $,
\begin{align*}
	\norm{ K_{\bk\bk} }_{p}
	&\leq
	\Av\big( |f_{\bk\bk}|\cdot(\norm{\bk_x}_{2p}\norm{\bk_y}_{2p}) \big),
\\
	\norm{ K_{\bk} }_{p}
	&\leq
	\Av\big( (|f_{\bk}|+|g_{\bk}|)\cdot(\norm{\bk_x}_p + \norm{\bk_y}_p) \big),
\\
	\norm{ K_{\bk\bdy} }_{p}
	&\leq
	\Av\big( (|f_{\bk\bdy}|+|g_{\bk\bdy}|)\cdot(\norm{\bk_x}_{2p}\norm{\bdy_y}_{2p} + \norm{\bk_y}_{2p}\norm{\bdy_x}_{2p}) \big).
\end{align*}
Given that $ f_{\bk\bk} $ and $ f_{\bk\bdy} $ are bounded,
apply Lemma~\ref{lem:ptmom} gives $ \norm{ K_{\bk\bk} }_{1}, \norm{ K_{\bk} }_{2}, \norm{ K_{\bk\bdy} }_{1} \to 0 $.

It now remains to show that $ K_{\bdy}, K_{\bdy\bdy} \to_\text{P} 0 $.
From~\eqref{eq:bdy:contraction} and \eqref{eq:condVar-xi},
it is not hard to check that $ \Ex[\bdy_x(x,y)^2] \not\to 0 $ (so our bound in Lemma~\ref{lem:ptmom} is sharp).
Given this situation, unlike in the preceding, 
here we cannot apply triangle inequality to pass $ \norm{ \,\Cdot\, }_{1} $ into the sum $ \Av $.
Instead, we need to exploit the averaging effect of $ \Av $.
This is done in the following Lemma, which completes the proof.

\begin{lem}
Given deterministic $ f:\Z_{>0}^2\to\R $ and $ a<\infty $, we have that
\begin{align}
	\label{eq:bdydecay}
	\Ex\Big[ \Big( L^{-2}\sum_{x,y\in[0,aL]\cap\Z} f(x,y) \bdy_{\alpha}(x,y) \Big)^2 \Big]
	&\leq
	c(a)L^{-1}\norm{f}_{L^\infty(\Z_{>0}^2)}^2,
	\quad
	\alpha=x,y,
\\
	\label{eq:bdy2decay}
	\Ex\Big[ \Big( L^{-2}\sum_{x,y\in[0,aL]\cap\Z} f(x,y) \bdy_{x}(x,y)\bdy_{y}(x,y) \Big)^2 \Big]
	&\leq
	c(a)L^{-1}\norm{f}_{L^\infty(\Z_{>0}^2)}^2,
\end{align}
In particular, $ \norm{K_{\bk}}_2 + \norm{K_{\bdy\bdy}}_2 \to 0 $.
\end{lem}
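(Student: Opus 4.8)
Throughout write $c=c(a)$. Recall from~\eqref{eq:condE-xi} and Lemma~\ref{lem:cond-mom-xi} that, setting $\mathscr{C}_{(p,q)}:=\sigma\bigl(H(u,v):u\le p-1\text{ or }v\le q-1\bigr)$, one has $\Ex[\dnoise(p,q)\mid\mathscr{C}_{(p,q)}]=0$ and $\Ex[\dnoise(p,q)^2\mid\mathscr{C}_{(p,q)}]\le cL^{-3}$, and recall from the proof of Lemma~\ref{lem:cond-mom-xi} that $\dnoise(q_1,q_2)$ is a function of $H(u,v)$ over $u\le q_1,\ v\le q_2$. The plan is to expand the squares in~\eqref{eq:bdydecay}--\eqref{eq:bdy2decay}, reduce the resulting sums to a ``diagonal'' via the following observation, and count. \emph{Observation.} In a product $\dnoise(P_1)\cdots\dnoise(P_r)$, if some site $P$ strictly maximises the first coordinate among $P_1,\dots,P_r$, or strictly maximises the second coordinate, and occurs with multiplicity one, then $\Ex[\dnoise(P_1)\cdots\dnoise(P_r)]=0$; indeed, each other factor $\dnoise(P_i)$ is then $\mathscr{C}_P$-measurable, so conditioning on $\mathscr{C}_P$ and pulling out $\Ex[\dnoise(P)\mid\mathscr{C}_P]=0$ annihilates the expectation. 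For the counting I will use $|\rieI|\le c$ (Lemma~\ref{lem:bound-R}) and $\#([0,aL]\cap\Z)=O(L)$.

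I would prove~\eqref{eq:bdydecay} with $\alpha=x$ (the case $\alpha=y$ is symmetric). Expanding the square and inserting~\eqref{eq:bdy}, the left side becomes $L^{-2}\sum_{(x,y),(x'',y'')}f(x,y)f(x'',y'')\sum_{y',\tilde y}\rieI(0,y-y')\rieI(0,y''-\tilde y)\,\Ex[\dnoise(x+1,y')\dnoise(x''+1,\tilde y)]$, the spatial sums ranging over $[0,aL]\cap\Z$. By the Observation this pairing vanishes unless $(x+1,y')=(x''+1,\tilde y)$, i.e.\ $x=x''$ and $y'=\tilde y$, and the indicators in $\rieI$ restrict $y'$ to $O(L)$ values; since $\Ex[\dnoise(x+1,y')^2]\le cL^{-3}$, the inner sum is $\le c\cdot O(L)\cdot cL^{-3}=cL^{-2}$. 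There remain $O(L^3)$ terms (indexed by $x,y,y''\in[0,aL]\cap\Z$), each $\le\norm{f}_{\infty}^2\cdot cL^{-2}$, so against the prefactor $L^{-2}$ the total is $cL^{-1}\norm{f}_{\infty}^2$, as claimed.

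For~\eqref{eq:bdy2decay}, after expanding the square, by~\eqref{eq:bdy} the four factors $\bdy_x(x,y),\bdy_y(x,y),\bdy_x(x'',y''),\bdy_y(x'',y'')$ are weighted sums of $\dnoise$ supported on the two rows $x+1,x''+1$ and the two columns $y+1,y''+1$, so in a generic term of the expansion one noise variable sits on each of these four lines. If $x\ne x''$, say $x<x''$, the variable on row $x''+1$ strictly maximises the first coordinate among the four (all others have first coordinate $\le x''$) and has multiplicity one, so the term vanishes; if $x=x''$ but $y\ne y''$, say $y<y''$, the variable on column $y''+1$ strictly maximises the second coordinate and has multiplicity one, so it vanishes too. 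Hence only the diagonal $(x,y)=(x'',y'')$ contributes, reducing the left side to $L^{-4}\sum_{(x,y)\in([0,aL]\cap\Z)^2}f(x,y)^2\,\Ex[\bdy_x(x,y)^2\bdy_y(x,y)^2]$. Since $\bdy_x(x,y)$ uses only $\dnoise(x+1,y')$ with $y'\le y$, it is $\sigma(H(u,v):v\le y)$-measurable; conditioning on that $\sigma$-field, $\{\dnoise(x',y+1)\}_{x'\le x}$ is still a martingale-difference sequence with conditional second moment $\le cL^{-3}$, hence $\Ex[\bdy_y(x,y)^2\mid\sigma(H:v\le y)]\le L^2\cdot O(L)\cdot cL^{-3}=c$ and so $\Ex[\bdy_x(x,y)^2\bdy_y(x,y)^2]\le c\,\Ex[\bdy_x(x,y)^2]\le c$ by Lemma~\ref{lem:ptmom}. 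Summing the $O(L^2)$ diagonal terms (each $\le c\norm{f}_{\infty}^2$) against the prefactor $L^{-4}$ gives $cL^{-2}\norm{f}_{\infty}^2$, which is stronger than needed.

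Finally, the two remaining pieces of $\hat{S}_2$ are $K_{\bdy}=\Av(f_\bdy\bdy_x+g_\bdy\bdy_y)$ and $K_{\bdy\bdy}=\Av(f_{\bdy\bdy}\bdy_x\bdy_y)$, where $f_\bdy,g_\bdy,f_{\bdy\bdy}$ each equal $\rieI_{ij}$ times factors that are bounded and deterministic by Lemmas~\ref{lem:bound-R} and~\ref{lem:apribd}; in particular they are bounded, and supported in $([0,L\barx]\cap\Z)\times([0,L\bary]\cap\Z)$, which for $c_*$ as in~\eqref{eq:c*} is precisely the range of the sum defining $\Av$. Applying~\eqref{eq:bdydecay}--\eqref{eq:bdy2decay} to $f\in\{f_\bdy,g_\bdy,f_{\bdy\bdy}\}$, which have sup-norm $\le c$, yields $\norm{K_{\bdy}}_2+\norm{K_{\bdy\bdy}}_2\le cL^{-1/2}\to0$; together with the bounds already obtained for $K_{\bk\bk},K_\bk,K_{\bk\bdy}$, this shows $\hat{S}_2\to_{\text{P}}0$. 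The step I expect to be the real obstacle is the case analysis underlying the ``unique maximiser'' dichotomy for~\eqref{eq:bdy2decay}: it degenerates near the boundary of the quadrant, where the maximising site has small (but still bounded) multiplicity and one fewer free summation index, so those configurations must be checked separately, even though they end up absorbed into the same bound; a secondary point is the choice of $\sigma$-field used to decouple $\bdy_x(x,y)$ from $\bdy_y(x,y)$ on the diagonal, these being built from disjoint noise variables yet not independent.
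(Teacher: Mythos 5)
Your proposal is correct, and for \eqref{eq:bdy2decay} it takes a genuinely different route from the paper. For \eqref{eq:bdydecay} the two arguments coincide: expand the square, kill the off-diagonal pairs by the martingale property of $\dnoise$, and count the $O(L^4)$ surviving index tuples against $\Ex[\dnoise^2]\le cL^{-3}$ and the prefactor $L^{-2}$. For \eqref{eq:bdy2decay} the paper expands into a four-fold sum of $\Ex[\dnoise(P_1)\cdots\dnoise(P_4)]$ and runs an exhaustive case analysis over the coincidence pattern of the four points, iterating conditional expectations along the anti-diagonal filtration $\filtnoi$ and invoking Lemma~\ref{lem:cond-mom-xi} with $k=1,2,3,4$; the dominant case (one coincident pair plus two singletons) contributes $cL^{6}\cdot L^{-7}=cL^{-1}$. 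You instead exploit that the four noise variables sit on the two rows $x+1,x''+1$ and two columns $y+1,y''+1$, so whenever $(x,y)\neq(x'',y'')$ one of them strictly maximises a coordinate with multiplicity one and the whole term vanishes exactly --- legitimate because $\dnoise(p,q)$ is a function of $H(u,v)$ over $u\le p$, $v\le q$ and hence is measurable with respect to the conditioning field of any point strictly dominating it in either coordinate. Only the diagonal survives, and the factorisation $\Ex[\bdy_x^2\bdy_y^2]\le c\,\Ex[\bdy_x^2]\le c$ via conditioning on $\sigma(H(u,v):v\le y)$ then yields $cL^{-2}\norm{f}_{L^\infty}^2$, strictly sharper than the paper's $cL^{-1}$. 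What the paper's cruder case analysis buys is robustness (it never uses the special row/column support of $\bdy_x,\bdy_y$); what yours buys is brevity and a better exponent. Your two closing worries are unfounded: the unique-maximiser dichotomy holds uniformly since the indicators in $\rieI$ force $y_i'\le y$, $x_i'\le x$, etc., with no boundary degeneracy, and the $\sigma$-field you chose already performs the decoupling correctly. One cosmetic point: the ``In particular'' in the statement should read $K_\bdy$ rather than $K_\bk$ (the latter is disposed of before the lemma), and your final paragraph treats the correct pair $K_\bdy$, $K_{\bdy\bdy}$.
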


\begin{proof}
Fixing $ a\in[1,\infty) $ and $ f:\Z_{\geq 0}^2 \to \R $. 
To simplify notation, throughout the proof we write $ c=c(a) $,
and, always assume (without explicitly stating) that variables $ x,y,x_1 $, etc., are in $ [0,aL]\cap\Z $.

We begin with the bound~\eqref{eq:bdydecay}.
Take $ \alpha=x $ to simplifyy notation.
Calculate the l.h.s.\ of~\eqref{eq:bdydecay} from~\eqref{eq:bdy}. 
By Lemma~\ref{lem:bound-R}, the variables $ \dnoise(x,y) $, $ x,y\in\Z_{>0}^2 $ are uncorrelated, so
\begin{align*}
	\text{l.h.s.\ of }\eqref{eq:bdydecay}
	&=
	L^{-2} \sum_{(x_1,y_1),(x_2,y_2)} \sum_{y'_1,y'_2}
	\Big(\prod_{i=1}^2 f(x_i,y_i) \rieI(0,y_i-y'_i) \Big)\Ex\big[\dnoise(x_1+1,y'_1)\dnoise(x_2+1,y'_2)\big]
\\
	&=
	L^{-2} \sum_{(x_1,y_1),(x_2,y_2),x_1=x_2} \sum_{y}
	\Big( \prod_{i=1}^2 f(x_i,y_i) \rieI(0,y_i-y) \Big) \Ex\big[\dnoise(x_1+1,y)^2\big].
\end{align*}
By Lemma~\ref{lem:bound-R}, the Riemann function $ \rieI $ is bounded,
and by Lemma~\ref{lem:cond-mom-xi}, $ \Ex[\dnoise(x_1+1,y)^2] \leq cL^{-3} $.
With $ x_i,y_i,y \in [0,aL]\cap\Z $, the number of terms within the sum is $ \leq cL^{3+1} $.
From this we conclude
\begin{align*}
	\text{l.h.s.\ of }\eqref{eq:bdydecay}
	\leq
	cL^{-2} L^{3+1}\norm{f}_{L^\infty(\Z_{>0}^2)}^2 L^{-3}
	\leq
	cL^{-{1}}\norm{f}_{L^\infty(\Z_{>0}^2)}^2.	
\end{align*}

We now move onto~\eqref{eq:bdy2decay}.
Similarly to the preceding, we calculate
\begin{align}
\begin{split}
	\label{eq:bdy:contration4}
	\text{l.h.s.\ of }\eqref{eq:bdy2decay}
	=
	\sum_{(x_1,y_1),(x_2,y_2)} \sum_{x'_1,x'_2,y'_1,y'_2}
	&\Big(\prod_{i=1}^2 f(x_i,y_i) \rieI(0,y_i-y'_i) \rieI(x_i-x'_i,0) \Big)
\\	
	&
	\cdot
	\Ex\Big[\prod_{i=1}^2\dnoise(x_i+1,y'_i)\prod_{i=1}^2\dnoise(x_i',y_i+1)\Big].
\end{split}
\end{align}
To bound the r.h.s.\ of~\eqref{eq:bdy:contration4}, we proceed by discussing the relative location of the following four points where $\dnoise$ is evaluated:
\begin{align*}
(x(s_1),y(s_1)):=(x_1+1,y'_1),
&\qquad
(x(s_2),y(s_2)):=(x_2+1,y'_2),
\\
(x(s_3),y(s_3)):=(x_3',y_3+1),
&\qquad
(x(s_4),y(s_4)):=(x_4',y_4+1).
\end{align*}
Here, $s_i \in\Z_{> 0}  $ denotes the order of the point under the linear ordering~\eqref{eq:linord}.
For example, if $ (x_2+1,y'_2)=(2,2) $, $ s_2=3 $.
Let $ s_{*}=\max\{s_1,\ldots,s_4\} $ denote the maximal order among the four points,
and let $ \filtnoi(t) := \sigma(\dnoise(x(1),y(1)),\ldots,\dnoise(x(t),y(t))) $
denote the canonical filtration of $\dnoise(x,y) $ under the linear ordering~\eqref{eq:linord}.
\begin{enumerate}
\item	\label{bk:1-}
	The point  $(x(s_*),y(s_*))$    
	is separated from the other three points.\\
	In this case, first take conditional expectation $ \Ex[\,\Cdot\,|\filtnoi(s_*-1)] $,
	with the aid of~\eqref{eq:condE-xi}, we have
	\begin{align*}
		\Ex\Big[ \!\!\!\! \prod_{s\in\{s_1,\cdots,s_4\}} \!\!\!\! \dnoise(x(s),y(s)) \Big]
		=
		\Ex\Big[\prod_{s\neq s_*}\dnoise(x(s),y(s)) \, \Ex\big[\dnoise(x(s_*),y(s_*))|\filtnoi(s_* -1)\big] \Big] = 0.
	\end{align*}
\item	\label{bk:211} 
	The point $ (x(s_*),y(s_*)) $ is identical with another point, 
	and the other two points are separated.\\
	Take $ s_1=s_2>s_3>s_4 $ to simplify notation, and other permutations follows exactly the same.
	In this case, take conditional expectation $ \Ex[\,\Cdot\,|\filtnoi(s_{1}-1)] $, $ \Ex[\,\Cdot\,|\filtnoi(s_{3}-1)] $,
	and $ \Ex[\,\Cdot\,|\filtnoi(s_{4}-1)] $ in order,
	using Lemma~\ref{lem:cond-mom-xi} for $ k=2,1,1 $, respectively, we have
	\begin{align*}
		\Ex\Big[ \!\!\!\! \prod_{s\in\{s_1,\cdots,s_4\}} \!\!\!\! \dnoise(x(s),y(s)) \Big]
		\leq c L^{-2-1} L^{-1-1} L^{-1-1} = cL^{-7}.
	\end{align*}
\item \label{bk:22} 
The point $ (x(s_*),y(s_*)) $ is identical with another point, 
	and the other two points are identical.  \\  
	Take $ s_1=s_2>s_3=s_4 $ to simplify notation, and other permutations follows exactly the same.
	In this case, take conditional expectation $ \Ex[\,\Cdot\,|\filtnoi(s_{1}-1)] $, $ \Ex[\,\Cdot\,|\filtnoi(s_{3}-1)] $
	in order,
	using Lemma~\ref{lem:cond-mom-xi} for $ k=2,2 $, respectively, we have
	\begin{align*}
		\Ex\Big[ \!\!\!\! \prod_{s\in\{s_1,\cdots,s_4\}} \!\!\!\! \dnoise(x(s),y(s)) \Big]
		 \leq c L^{-1-2} L^{-1-2} = cL^{-6}.
	\end{align*}
\item \label{bk:31}
The point $ (x(s_*),y(s_*)) $ is identical with two other points, 
	and the fourth point is separated.\\
	Take $ s_1=s_2=s_3>s_4 $ to simplify notation, and other permutations follows exactly the same.
	In this case, take conditional expectation $ \Ex[\,\Cdot\,|\filtnoi(s_{1}-1)] $, $ \Ex[\,\Cdot\,|\filtnoi(s_{4}-1)] $
 in order,
	using Lemma~\ref{lem:cond-mom-xi} for $ k=3,1 $, respectively, we have
	\begin{align*}
		\Ex\Big[ \!\!\!\! \prod_{s\in\{s_1,\cdots,s_4\}} \!\!\!\! \dnoise(x(s),y(s)) \Big]
		\leq c L^{-1-3} L^{-1-1} = cL^{-6}.
	\end{align*}
\item \label{bk:4} 
	All four points are together.\\
	Using Lemma~\ref{lem:cond-mom-xi} for $ k=4 $ gives
	\begin{align*}
	\Ex\Big[ \!\!\!\! \prod_{s\in\{s_1,\cdots,s_4\}} \!\!\!\! \dnoise(x(s),y(s)) \Big]
		\leq c L^{-1-4} = cL^{-5}.
	\end{align*}
\end{enumerate}
\noindent
Now, with $ x_i,y_i \in [0,aL]\cap\Z $, the number of terms 
within the sum in~\eqref{eq:bdy:contration4} is of order $ L^{8} $.
Each contraction of points reduce the number of terms by $ L^{-2} $.
For example, the number of terms corresponding the case \eqref{bk:211} is $ \leq cL^{8-2} $,
because $ (x(s_*),y(s_*)) $  
being joined once amounts to contracting one point.
Following this line of reasoning, the number of terms 
within each cases~\eqref{bk:211}--\eqref{bk:4}
are bounded by $ cL^{6} $, $ cL^{4} $, $ cL^{4} $, $ cL^{2} $, respectively.
From these discussions, we bound the r.h.s.\ of~\eqref{eq:bdy:contration4} by
\begin{align*}
	\text{l.h.s.\ of }\eqref{eq:bdydecay}
	\leq
 cL^{6-7}+cL^{4-6}+cL^{4-6}+cL^{2-5}  \leq cL^{-1}.
\end{align*}
This concludes the proof.
%
\end{proof}

\bibliographystyle{alphaabbr}
\bibliography{Reference}

\end{document}